\documentclass[12pt]{amsart}
\usepackage{amsfonts}
\usepackage{ifthen}
\usepackage{amsthm}
\usepackage{amsmath}
\usepackage{graphicx}
\usepackage{amscd,amssymb,amsthm}

\newcounter{minutes}\setcounter{minutes}{\time}
\divide\time by 60
\newcounter{hours}\setcounter{hours}{\time}
\multiply\time by 60 \addtocounter{minutes}{-\time}

\setlength{\paperwidth}{210mm} \setlength{\paperheight}{297mm}
\setlength{\oddsidemargin}{0mm} \setlength{\evensidemargin}{0mm}
\setlength{\topmargin}{-20mm} \setlength{\headheight}{10mm}
\setlength{\headsep}{13mm} \setlength{\textwidth}{160mm}
\setlength{\textheight}{240mm} \setlength{\footskip}{15mm}
\setlength{\marginparwidth}{0mm} \setlength{\marginparsep}{0mm}

\title{On Kaluza's sign criterion  for reciprocal power series}

\author{\'Arp\'ad Baricz}
\author{Jetro Vesti}
\author{Matti Vuorinen}

\address{Department of Economics, Babe\c{s}-Bolyai University,
Cluj-Napoca 400591, Romania} \email{bariczocsi@yahoo.com}
\address{Department of Mathematics, University of Turku, Turku 20014,
Finland} \email{jejove@utu.fi}
\address{Department of Mathematics, University of Turku, Turku 20014,
Finland} \email{vuorinen@utu.fi}

\keywords{Power series; Log-convexity; Hypergeometric functions;
Trigonometric functions.} \subjclass[2000]{30B10, 33C05, 33B10}

\newtheorem{theorem}[equation]{Theorem}

\newtheorem{proposition}[equation]{Proposition}
\newtheorem{example}[equation]{Example}

\numberwithin{equation}{section}

\begin{document}

\def\thefootnote{}
\footnotetext{ \texttt{File:~\jobname .tex,
          printed: \number\year-0\number\month-\number\day,
          \thehours.\ifnum\theminutes<10{0}\fi\theminutes}
} \makeatletter\def\thefootnote{\@arabic\c@footnote}\makeatother

\maketitle

\begin{abstract}
T. Kaluza has given a criterion for the signs of the power series of
a function that is the reciprocal of another power series. In this
note the sharpness of this condition is explored and various
examples in terms of the Gaussian hypergeometric series are given. A
criterion for the monotonicity of the quotient of two power series
due to M. Biernacki and J. Krzy\.z is applied.
\end{abstract}

\section{Introduction}

In this paper we are mainly interested on the class of Maclaurin
series $\sum_{n\geq0}a_nx^n,$ which are convergent for
$x\in\mathbb{R}$ such that $|x|<r.$ Throughout in the paper
$\{a_n\}_{n\geq0}$ is a sequence of real numbers and $r>0$ is the
radius of convergence. Note that if $f(x)=\sum_{n\geq0}a_nx^n$ and
$g(x)=\sum_{n\geq0}b_nx^n$ are two Maclaurin series with radius of
convergence $r,$ then their product
$h(x)=f(x)g(x)=\sum_{n\geq0}c_nx^n$ has also radius of convergence
$r$ and Cauchy's product rule gives the coefficients $c_n$ of $h(x)$
as
\begin{equation}\label{Cauchy}
c_n= \sum_{k=0}^{n} a_kb_{n-k},
\end{equation}
known as the convolution of $a_n$ and $b_n.$ If $g(x)$ never
vanishes, also the quotient $q(x)= f(x)/g(x)=\sum_{n\geq0}q_nx^n$ is
convergent with radius of convergence $r$ and we obtain the rule for
the coefficients $q_n$ by interchanging $a$ and $c$ in
(\ref{Cauchy})
$$q_n =  (a_n - \sum_{k=0}^{n-1} q_k b_{n-k})/b_0.$$
We note that a special case of the above relation when $a_0=1$ and
$0=a_1=a_2=\dots$ yields the following result.

\begin{proposition}\label{ratkaiseminen}
Suppose that $g(x)=\sum_{n\geq0}b_n x^n$ with  $b_0\neq0$ and
${1}/{g(x)}=\sum_{n\geq0} q_n x^n.$ In order to solve $q_n$ we need
to know $b_0,b_1,b_2,\dots ,b_n$.
\end{proposition}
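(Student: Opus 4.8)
The plan is to read off everything from Cauchy's product rule \eqref{Cauchy} applied to the identity $g(x)\cdot\bigl(1/g(x)\bigr)=1$. Setting $f(x)=1$ in the convolution formula (that is, $a_0=1$ and $a_n=0$ for $n\ge1$), the product $h(x)=g(x)\cdot\sum_{n\ge0}q_nx^n$ has coefficients $c_0=1$ and $c_n=0$ for $n\ge1$, while \eqref{Cauchy} (with the roles of the sequences as in the displayed line preceding the statement) gives $c_n=\sum_{k=0}^{n}b_kq_{n-k}$. Comparing, one gets $b_0q_0=1$, whence $q_0=1/b_0$ — this is the point where the hypothesis $b_0\neq0$ is used, and it also explains why $1/g$ has a genuine Maclaurin expansion near the origin — and for every $n\ge1$ the relation $\sum_{k=0}^{n}b_kq_{n-k}=0$, i.e.
$$
q_n=-\frac{1}{b_0}\sum_{k=1}^{n}b_kq_{n-k}.
$$

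Next I would run a strong induction on $n$. The base case $q_0=1/b_0$ is a function of $b_0$ alone. Assuming that each of $q_0,q_1,\dots,q_{n-1}$ has been expressed in terms of $b_0,b_1,\dots,b_{n-1}$ only, the displayed recursion writes $q_n$ as a combination of $b_1,\dots,b_n$ and $q_0,\dots,q_{n-1}$, hence ultimately of $b_0,b_1,\dots,b_n$ and nothing more; this proves sufficiency. For the necessity half of the claim — that one cannot get away with fewer of the $b_j$ — I would observe from the recursion that $q_n$ genuinely involves $b_n$: its coefficient there is $-q_0/b_0=-1/b_0^{2}\neq0$, so two power series agreeing in $b_0,\dots,b_{n-1}$ but differing in $b_n$ have reciprocals with different $n$-th coefficients, and clearly the earlier $b_j$ are needed through $q_0,\dots,q_{n-1}$.

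There is no serious obstacle here; the statement is essentially a bookkeeping consequence of the convolution identity. The only points that need care are: invoking $b_0\neq0$ both to justify that $1/g$ is a power series about $0$ and to solve the recursion; and pinning down the intended meaning of ``we need to know $b_0,\dots,b_n$'' as the assertion that $q_n$ is a well-defined function of exactly these coefficients, which the triangular recursion above exhibits explicitly.
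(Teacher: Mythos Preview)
Your argument is correct and follows essentially the same route as the paper: both multiply $g(x)\cdot(1/g(x))=1$, read off the triangular system $\sum_{k=0}^{n}b_kq_{n-k}=\delta_{n,0}$, and solve it recursively to exhibit $q_n$ as a function of $b_0,\dots,b_n$. Your proof is in fact slightly more complete, since you also address the necessity direction (that $b_n$ genuinely enters $q_n$ with coefficient $-1/b_0^{2}\neq0$), which the paper's proof leaves implicit.
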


\begin{proof}[\bf Proof]
Since
$$\frac{1}{b_0+b_1 x+b_2 x^2+{\dots}+b_nx^n+{\dots}}
=q_0+q_1 x+q_2 x^2+{\dots}+q_n x^n+{\dots},$$ we just need to solve
the linear equations
$$\left\{\begin{array}{ll}
1=b_0 q_0\\ 0=b_1 q_0+b_0 q_1 \\ 0= b_2 q_0+b_1 q_1+b_0 q_2\\ \vdots
\\ 0= \sum_{k=0}^n b_k q_{n-k}\\
\end{array} \right.
\Longleftrightarrow  \left\{\begin{array}{ll}
q_0={1}/{b_0}\\ q_1=(-b_1 q_0)/b_0 \\
q_2=(-b_2 q_0-b_1 q_1)/b_0\\ \vdots \\
q_n=(-\sum_{k=1}^n b_k q_{n-k})/b_0 \end{array} \right..$$ Thus,
$q_n=\phi(b_0,b_1,\dots,b_n),$ where $\phi$ is some function.
\end{proof}

In 1928 Theodor Kaluza\footnote{In passing we remark that he was a
German mathematician interested in Physics, where his name is
associated with so called Kaluza-Klein theory.} \cite{k} proved the
following theorem.

\begin{theorem}\label{Kaluza1}
Let $f(x)=\sum_{n\geq0}a_nx^n$ be a convergent Maclaurin series with
radius of convergence $r>0.$ If $a_n>0$ for all $n\in\{0,1,\dots\}$
and the sequence $\{a_n\}_{n\geq0}$ is log-convex, that is, for all
$n\in\{1,2,\dots\}$
\begin{equation}\label{Kaluza2}
a_n^2 \leq a_{n-1}a_{n+1},
\end{equation}
then the coefficients $b_n$ of the reciprocal power series $1/f(x) =
\sum_{n\geq0}b_nx^n$  have the following properties: $b_0=1/a_0>0$
and $b_n\leq 0$ for all $n\in\{1,2,\dots\}.$
\end{theorem}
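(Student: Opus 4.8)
The plan is to normalize and then induct, at each step playing the defining convolution identity for $1/f$ against a termwise consequence of log-convexity.

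First I would reduce to the case $a_0=1$: replacing $f$ by $f/a_0$ multiplies every $b_n$ by $a_0>0$ and alters neither hypothesis \eqref{Kaluza2} nor the conclusion, and it makes $b_0=1/a_0=1$ (which already records the claim $b_0=1/a_0>0$). Since $f(0)=a_0\neq 0$, the function $1/f$ is analytic near the origin, so the $b_n$ are well defined; by Cauchy's product rule \eqref{Cauchy} applied to $f\cdot(1/f)=1$ --- equivalently by the linear system in Proposition~\ref{ratkaiseminen} --- they satisfy $\sum_{k=0}^{n}a_kb_{n-k}=0$ for all $n\ge 1$. Setting $\beta_n:=-b_n$, substituting $b_0=1$ and reindexing turns this into the renewal-type identity
\begin{equation*}
a_n=\sum_{k=1}^{n}a_{n-k}\,\beta_k\qquad(n\ge 1),
\end{equation*}
so the theorem becomes the assertion that $\beta_n\ge 0$ for every $n\ge 1$.

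I would then prove $\beta_n\ge 0$ by strong induction on $n$. The base case is $\beta_1=a_1\ge 0$. For the step, assume $\beta_1,\dots,\beta_n\ge 0$; isolating the last term of the renewal identity at level $n+1$ gives $\beta_{n+1}=a_{n+1}-\sum_{k=1}^{n}a_{n+1-k}\,\beta_k$. The key point is that log-convexity \eqref{Kaluza2} is equivalent to the ratios $a_{m+1}/a_m$ being nondecreasing in $m$; since $n-k\le n$ for $1\le k\le n$, this yields the termwise estimate $a_{n+1-k}\le (a_{n+1}/a_n)\,a_{n-k}$. Multiplying by $\beta_k\ge 0$, summing, and then using the renewal identity at level $n$,
\begin{equation*}
\sum_{k=1}^{n}a_{n+1-k}\,\beta_k\ \le\ \frac{a_{n+1}}{a_n}\sum_{k=1}^{n}a_{n-k}\,\beta_k\ =\ \frac{a_{n+1}}{a_n}\cdot a_n\ =\ a_{n+1},
\end{equation*}
so $\beta_{n+1}\ge a_{n+1}-a_{n+1}=0$, which closes the induction.

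The only real obstacle is the bookkeeping: one must arrange the induction so that the renewal identity is available simultaneously at two consecutive levels, apply the monotonicity of $\{a_{m+1}/a_m\}$ termwise, and invoke it only after the signs of the $\beta_k$ involved have been established by the induction hypothesis. Positivity of every $a_n$ makes all the ratios legitimate, so no degenerate subcase intervenes. (Incidentally, the argument yields a little more than is asked: $\{-b_n\}_{n\ge 1}$ is exactly the ``increment'' sequence that reproduces $\{a_n\}$ as a renewal sequence.)
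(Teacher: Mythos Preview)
Your argument is correct. However, note that the paper does not actually supply its own proof of Theorem~\ref{Kaluza1}: it attributes the result to Kaluza and refers the reader to Carlitz's short proof in \cite{c}, to \cite{ha}, and to \cite{h}. So there is no in-paper proof to compare against.

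That said, your proof is essentially the classical one (and in particular is the argument Carlitz gives in \cite{c}): normalize to $a_0=1$, rewrite the Cauchy relation for $f\cdot(1/f)=1$ as the renewal identity $a_n=\sum_{k=1}^{n}a_{n-k}\beta_k$ with $\beta_k=-b_k$, and induct using that log-convexity makes $a_{m+1}/a_m$ nondecreasing so that $a_{n+1-k}\le (a_{n+1}/a_n)a_{n-k}$ termwise. The only cosmetic point is that your justification ``$n-k\le n$'' could be sharpened to ``$n-k\le n-1$'' for $k\ge 1$, but either way the monotonicity of the ratio sequence applies and the bound $\sum_{k=1}^{n}a_{n+1-k}\beta_k\le a_{n+1}$ follows, closing the induction. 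Nothing is missing.
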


In what follows we say that a power series has the {\em Kaluza sign
property} if the coefficients of its reciprocal power series are all
non-positive except the constant term. Theorem \ref{Kaluza1} then
says that if the power series $f(x)$ has positive and log-convex
coefficients, then $f(x)$ has the Kaluza sign property. For a short
proof of Theorem \ref{Kaluza1} see \cite{c}. This result is also
cited in \cite[p. 68]{ha} and \cite[p. 13]{h}. Note that Theorem
\ref{Kaluza1} in Jurkat's paper \cite{j} is attributed to Kaluza and
Szeg\H o, however Szeg\H o \cite{sz} attributes this result to
Kaluza. We also note that this result implies, in particular, that
the function $x\mapsto 1/f(x)$ is decreasing on $(0,r).$ This
observation is also clear because $x\mapsto f(x)$ is increasing on
$(0,r).$ It is also important to note here that Kaluza's result is
useful in the study of renewal sequences, which are frequently
applied in probability theory. For more details we refer to the
papers \cite{hansen,horn,kendall,l} and to the references contained
therein.

We will next look at the condition (\ref{Kaluza2}) from the point of
view of power means. For fixed $a,b,t>0, $ we define the power mean
by
$$m(a,b,t)=\left(\frac{a^t + b^t}{2}\right)^{1/t}.$$
It is well-known (see for example \cite{bb}) that
$\lim\limits_{t\to0}m(a,b,t)=\sqrt{ab}$ and the function $t\mapsto
m(a,b,t)$ is increasing on $(0,\infty)$ for all fixed $a,b>0.$
Therefore for all $u>t>0$ we have
$$\sqrt{ab} \le m(a,b,t) \le m(a,b,u).$$

By observing that (\ref{Kaluza2}) is the same as $a_n
\leq\lim\limits_{t\to0} m(a_{n-1},a_{n+1},t)$ we can prove that
(\ref{Kaluza2}) is sharp in the following sense.

\begin{theorem}\label{theorem2}
Suppose that in the above theorem all the hypotheses except
(\ref{Kaluza2}) are satisfied and (\ref{Kaluza2}) is replaced with
\begin{equation}\label{Kaluza4}
a_n \leq m(a_{n-1},a_{n+1},t)
\end{equation}
where $n\in\{1,2,\dots\}$ and $t\geq 1/100.$ Then the conclusion of
Theorem \ref{Kaluza1} is no longer true.
\end{theorem}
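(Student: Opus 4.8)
The plan is to construct, already for the smallest admissible value $t=1/100$, a single Maclaurin series $f(x)=\sum_{n\ge0}a_nx^n$ with all coefficients $a_n>0$ that satisfies the weakened hypothesis (\ref{Kaluza4}) at every $n\ge1$ but whose reciprocal has a strictly positive coefficient past the constant term. Because $t\mapsto m(a,b,t)$ is increasing, such an $f$ then automatically satisfies (\ref{Kaluza4}) for every $t\ge1/100$, so one counterexample settles the whole range. By Proposition \ref{ratkaiseminen} the reciprocal coefficients are $b_0=1/a_0$, $b_1=-a_1/a_0^2$ and
$$b_2=\frac{a_1^2-a_0a_2}{a_0^3},$$
so $b_2>0$ precisely when (\ref{Kaluza2}) fails at $n=1$, i.e. when $a_1^2>a_0a_2$. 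Hence it suffices to exhibit positive coefficients with $a_1^2>a_0a_2$ for which (\ref{Kaluza4}) nonetheless holds for all $n\ge1$.

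The key point is that the power mean strictly dominates the geometric mean off the diagonal: $m(a_0,a_2,t)>\sqrt{a_0a_2}$ whenever $a_0\neq a_2$ and $t>0$, which leaves room to choose $a_1$ in the interval $\left(\sqrt{a_0a_2},\,m(a_0,a_2,t)\right]$. Quantitatively, normalizing $a_0=1$,
$$m(1,b,1/100)=\left(\frac{1+b^{1/100}}{2}\right)^{100}>\left(\frac12\right)^{100}=2^{-100}\qquad\text{for every }b>0.$$
I would therefore take $a_1=2^{-101}$ — which lies below $2^{-100}<m(1,a_2,1/100)$ and above $\sqrt{a_2}$ as long as $a_2<2^{-202}$ — together with $a_2=2^{-303}$. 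This already gives $a_1^2-a_0a_2=2^{-202}-2^{-303}>0$, hence $b_2>0$, and it verifies (\ref{Kaluza4}) for $n=1$.

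It remains to complete $\{a_n\}$ to an admissible sequence, and the cleanest way is to continue this geometrically: set $a_n=a_1^{2n-1}$ for all $n\ge1$ (so $a_1$ and $a_2$ are as above), a geometric progression of ratio $a_1^2<1$. Then $a_n^2=a_{n-1}a_{n+1}$ for every $n\ge2$, so the sequence is log-convex from $n=2$ onward, whence $a_n=\sqrt{a_{n-1}a_{n+1}}\le m(a_{n-1},a_{n+1},t)$ there for all $t>0$; moreover every $a_n>0$ and the radius of convergence equals $a_1^{-2}=2^{202}>0$. Thus $f$ satisfies all hypotheses of Theorem \ref{Kaluza1} except (\ref{Kaluza2}), satisfies (\ref{Kaluza4}) for all $n\ge1$ and — by the monotonicity of $t\mapsto m(a,b,t)$ — for all $t\ge1/100$, yet $1/f(x)=\sum_{n\ge0}b_nx^n$ has $b_2>0$, contradicting the conclusion of Theorem \ref{Kaluza1}. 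The only step that carries the idea is this reduction together with the strict inequality $m>\sqrt{\cdot}$; after that the verification is routine, the only mild technical point being the simultaneous fulfilment of the two demands on $a_1$ (it must exceed $\sqrt{a_2}$, so that (\ref{Kaluza2}) breaks, yet not exceed $m(1,a_2,1/100)$, so that (\ref{Kaluza4}) holds), which the uniform bound $m(1,b,1/100)>2^{-100}$ disposes of at once. In other words, the real content of the theorem is the observation that, since the power mean strictly exceeds the geometric mean, no relaxation of (\ref{Kaluza2}) of the type (\ref{Kaluza4}) with $t$ bounded away from $0$ can be enough.
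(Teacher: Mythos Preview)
Your proof is correct and follows essentially the same strategy as the paper: construct an explicit sequence for which (\ref{Kaluza2}) fails only at $n=1$ (so $b_2>0$ in the reciprocal) while (\ref{Kaluza4}) still holds there, with the tail chosen log-convex so that (\ref{Kaluza4}) is automatic for $n\ge2$, and then invoke the monotonicity of $t\mapsto m(a,b,t)$ to cover all $t\ge1/100$. The paper uses the concrete series $q(x)=1.999+\sum_{n\ge1}x^n/n$ and verifies $m(1.999,0.5,1/100)\approx1.00215>1$ numerically, whereas your geometric tail together with the clean uniform bound $m(1,b,1/100)>2^{-100}$ avoids any numerical computation; this is a pleasant sharpening of the same idea rather than a different route.
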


\begin{proof}[\bf Proof]
The monotonicity with respect to $t$ yields for all
$n\in\{1,2,\dots\}$ and $u\geq t>0$
$$\left( \frac{a_{n-1}^t + a_{n+1}^t}{2}\right)^{1/t} \leq
\left(\frac{a_{n-1}^u + a_{n+1}^u}{2}\right)^{1/u}.$$ The series
$q(x)=1.999+\sum_{n\geq1}{x^n}/n$ satisfies all the hypotheses that
were made:
$$1 < \left(\frac{1.999^{1/100} + 0.5^{1/100}}{2}\right)^{100}(\approx 1.00215)
\le \left(\frac{1.999^{t} + 0.5^{t}}{2}\right)^{1/t}$$
for all $t\geq 1/100$ and generally when $n\in\{2,3,\dots\}$
$$ \frac{1}{n} < \sqrt{\frac{1}{(n-1)(n+1)}} \le
\left(\frac{\left(\frac{1}{n-1}\right)^{t} + \left(\frac{1}{n+1}\right)^{t}}{2}\right)^{1/t}$$
for all $t\geq 1/100.$ Because the series
$$\frac{1}{q(x)} = 0.50025 - 0.25025x + 0.000062594 x^2 - \ldots $$
has  a positive coefficient different from a constant term we get
our claim.
\end{proof}

Theorem \ref{theorem2} shows that it is not possible to replace the
hypothesis (\ref{Kaluza2}) with  (\ref{Kaluza4}), at least if $t\ge
{1}/{100}.$ Moreover, we note that it is easy to reduce the number
${1}/{100}$. To that end, it is enough to replace the constant
$1.999$ of the Maclaurin series $q(x)$ in the proof of Theorem
\ref{theorem2} with another constant in $(1.999, 2).$

\section{Remarks on the Kaluza sign property}

In this section we will make some general observations about power
series and Kaluza's Theorem \ref{Kaluza1}. The Gaussian
hypergeometric series is often useful for illustration purposes and
it is available at the Mathematica(R) software package which is used
for the examples. For $a,b,c$ real numbers and $|x|<1,$ it is
defined by
$$ {}_2 F_1(a,b;c;x)= \sum_{n\geq0}\frac{(a,n)(b,n)}{(c,n)n!}x^n,$$
where $(a,n)= a(a+1)...(a+n-1)=\Gamma(a+n)/\Gamma(a)$ for
$n\in\{1,2,\dots\}$ and ${(a,0)}=1,$ is the rising factorial and it
is required that $c\neq 0, -1,\dots$ in order to avoid division by
zero. Some basic properties of this series may be found in standard
handbooks, see for example \cite{olbc}.

We begin with an example which is related to Proposition
\ref{ratkaiseminen}.

\begin{example} {\rm
Let $$f(x)=\cosh{x}=\sum_{n\geq0}\frac{1}{(2n)!}x^{2n}$$ and
$$g(x)=\cos{x}=\sum_{n\geq0}\frac{(-1)^n}{(2n)!}x^{2n}.$$ Then
$$\frac{1}{f(x)} =1-\frac{x^2}{2}+\frac{5
   x^4}{24}-\frac{61
   x^6}{720}+\frac{277
   x^8}{8064}-\frac{50521
   x^{10}}{3628800}+\mathcal{O}\left(x^{
   11}\right)$$
and
$$
\frac{1}{g(x)}=1+\frac{x^2}{2}+\frac{5
   x^4}{24}+\frac{61
   x^6}{720}+\frac{277
   x^8}{8064}+\frac{50521
   x^{10}}{3628800}+\mathcal{O}\left(x^{
   11}\right).$$
Observe the similarities in the coefficients. Similarly, if
$$f(x)=\frac{\sinh(x)}{x}=\sum_{n\geq0}\frac{1}{(2n+1)!}x^{2n}$$ and
$$g(x)=\frac{\sin(x)}{x}=\sum_{n\geq0}\frac{(-1)^n}{(2n+1)!}x^{2n},$$ then
$$
\frac{1}{f(x)}= 1-\frac{x^2}{6}+\frac{7
   x^4}{360}-\frac{31
   x^6}{15120}+\frac{127
   x^8}{604800}-\frac{73
   x^{10}}{3421440}+\mathcal{O}\left(x^{
   11}\right)
$$
and
$$
\frac{1}{g(x)}=1+\frac{x^2}{6}+\frac{7
   x^4}{360}+\frac{31
   x^6}{15120}+\frac{127
   x^8}{604800}+\frac{73
   x^{10}}{3421440}+\mathcal{O}\left(x^{
   11}\right).
$$}
\end{example}

These observations are special cases of the following result.

\begin{proposition}\label{parillisetpotenssit}
Let $$f(x)=\sum_{n\geq0} a_{2n} x^{2n}\ \ \mbox{and}\ \
g(x)=\sum_{n\geq0}(-1)^n a_{2n} x^{2n},$$ where $a_{2n}>0$ for all
$n\in\{0,1,\dots\}.$ Then the coefficients of the reciprocal power
series $$\frac{1}{f(x)}=\sum_{n\geq0} b_{n}x^{n}\ \ \mbox{and}\ \
\frac{1}{g(x)}=\sum_{n\geq0}c_{n}x^{n}$$ satisfy
$b_{2n+1}=c_{2n+1}=0$ and $b_{2n}=(-1)^n c_{2n}$ for all
$n\in\{0,1,\dots\}.$
\end{proposition}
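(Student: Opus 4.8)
The plan is to exploit the fact that $g$ is obtained from $f$ by the formal substitution $x\mapsto ix$, together with the uniqueness of the reciprocal power series guaranteed by Proposition~\ref{ratkaiseminen} (the hypothesis $a_{2n}>0$ enters only through $a_0\neq0$, which makes both reciprocals legitimate).

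First I would record two parity facts. Since $f(x)=f(-x)$, the reciprocal satisfies $\sum_{n\geq0}b_nx^n=\sum_{n\geq0}b_n(-x)^n$, and comparing coefficients (again via the uniqueness in Proposition~\ref{ratkaiseminen}) forces $b_n=(-1)^nb_n$, hence $b_{2n+1}=0$ for all $n$. The identical argument applied to $g$, which is also an even function, gives $c_{2n+1}=0$.

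Next I would establish $b_{2n}=(-1)^nc_{2n}$. Introduce the auxiliary series $h(x)=\sum_{n\geq0}(-1)^nb_{2n}x^{2n}$. By Cauchy's product rule (\ref{Cauchy}), the coefficient of $x^{2m}$ in $g(x)h(x)$ equals $(-1)^m\sum_{k=0}^{m}a_{2k}b_{2(m-k)}$, and this inner sum is precisely the coefficient of $x^{2m}$ in the product $f(x)\cdot(1/f(x))=1$ — here the vanishing of the odd coefficients $b_{2j+1}$ is what lets one restrict to even indices. Consequently the inner sum is $1$ when $m=0$ and $0$ otherwise, so $g(x)h(x)=1$. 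By the uniqueness of the reciprocal power series we conclude $h(x)=1/g(x)$, that is $c_{2n}=(-1)^nb_{2n}$, which is the asserted relation because $(-1)^n$ is its own reciprocal. Equivalently, one may substitute $x\mapsto ix$ outright: $1/g(x)=1/f(ix)=\sum_{n\geq0}b_ni^nx^n=\sum_{n\geq0}(-1)^nb_{2n}x^{2n}$.

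I do not expect a serious obstacle. The only points needing a little care are the bookkeeping of indices in the Cauchy product and the repeated appeal to uniqueness of the reciprocal series; in the complex-substitution variant, one should add the remark that a real power series with radius of convergence $r$ also converges for complex arguments of modulus less than $r$, so that $f(ix)$ is meaningful on that disc.
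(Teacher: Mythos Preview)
Your proposal is correct, and it proceeds along a more conceptual route than the paper. The paper expands $1=f(x)\cdot(1/f(x))$ and $1=g(x)\cdot(1/g(x))$ explicitly, derives the recursive formulas
\[
b_{2n}=\frac{-1}{a_0}\sum_{k=0}^{n-1}b_{2k}a_{2(n-k)},\qquad
c_{2n}=\frac{-1}{a_0}\sum_{k=0}^{n-1}c_{2k}(-1)^{n-k}a_{2(n-k)},
\]
together with $b_{2n+1}=c_{2n+1}=0$, and then checks $b_{2n}=(-1)^nc_{2n}$ by a straightforward induction on these recursions. You instead package the same arithmetic into two clean functional observations: evenness of $f$ and $g$ forces the odd reciprocal coefficients to vanish, and the identity $g(x)=f(ix)$ (or, equivalently, the direct verification that $g\cdot h=1$ for $h(x)=\sum(-1)^n b_{2n}x^{2n}$) immediately transports the reciprocal of $f$ to that of $g$. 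Your approach trades the explicit induction for uniqueness of the reciprocal series and is slightly slicker; the paper's version has the minor advantage of never leaving the real line and of making the recursive structure entirely explicit, which is in keeping with the hands-on style of Proposition~\ref{ratkaiseminen}.
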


\begin{proof}[\bf Proof]
From the equation
\begin{align*}1&=(a_0+a_2 x^2+a_4x^4+\dots)(b_0+b_1 x+b_2 x^2+\dots)\\&=a_0 b_0+a_0 b_1
x+(b_0 a_2+b_2 a_0)x^2+{\dots}\\&+\left(\sum_{k=0}^{n}
b_{2k}a_{2(n-k)}\right) x^{2n}+ \left(\sum_{k=0}^{n}
b_{2k+1}a_{2(n-k)}\right)x^{2n+1}+\dots\end{align*} we get
inductively for all $n\in\{0,1,\dots\}$
$$b_1=b_3={\dots}=b_{2n+1}=0$$ and
$$b_0=\frac{1}{a_0}, b_2=\frac{1}{a_0}(-b_0
a_2),\dots, b_{2n}=\frac{1}{a_0}\left(-\sum_{k=0}^{n-1}
b_{2k}a_{2(n-k)}\right).$$ Similarly, for all $n\in\{0,1,\dots\}$ we
get
$$c_1=c_3={\dots}=c_{2n+1}=0$$ and
$$c_0=\frac{1}{a_0},c_2=\frac{1}{a_0}(c_0 a_2),\dots,
c_{2n}=\frac{1}{a_0}\left(-\sum_{k=0}^{n-1}
c_{2k}(-1)^{n-k}a_{2(n-k)}\right).$$ From these we get our claim:
$b_{2n+1}=0=c_{2n+1}$ is clear and $b_{2n}=(-1)^n c_{2n}$ follows by
induction.
\end{proof}

In the next proposition we show that log-convex sequences can be
classified into two types.

\begin{proposition}\label{kasvulasku}
If the positive sequence $\{a_n\}_{n\geq0}$ is log-convex, then the
following assertions are true:
\begin{enumerate}
\item[(1)] If $a_0\le a_1,$ then $a_0\le a_1\le a_2\le \dots;$
\item[(2)] If $a_1\leq a_0,$ then $a_0\ge a_1\ge a_2\ge\dots$
or there exists $k>0$ such that $a_0\ge a_1\ge a_2\ge \dots\ge
a_{k-1} \ge a_k$ and $a_k\le a_{k+1}\le{\dots}.$
\end{enumerate}
\end{proposition}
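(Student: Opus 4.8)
The plan is to translate the log-convexity hypothesis \eqref{Kaluza2} into a statement about the ratio sequence $\rho_n=a_{n+1}/a_n$, $n\ge 0$, which is well defined and positive because each $a_n>0$. Dividing \eqref{Kaluza2} by $a_{n-1}a_n>0$ yields $a_n/a_{n-1}\le a_{n+1}/a_n$, that is, $\rho_{n-1}\le\rho_n$ for every $n\ge 1$; hence $\{\rho_n\}_{n\ge 0}$ is a non-decreasing sequence of positive numbers. Both parts of the proposition then follow from this observation together with the elementary equivalence $a_{n+1}\ge a_n\iff\rho_n\ge 1$.

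For assertion (1), the hypothesis $a_0\le a_1$ reads $\rho_0\ge 1$, and monotonicity forces $\rho_n\ge\rho_0\ge 1$ for every $n$, i.e.\ $a_0\le a_1\le a_2\le\dots$. For assertion (2), the hypothesis $a_1\le a_0$ reads $\rho_0\le 1$, and I would split into two cases according to whether the non-decreasing sequence $\{\rho_n\}$ ever reaches the value $1$. If $\rho_n<1$ for all $n\ge 0$, then $a_{n+1}<a_n$ for every $n$, so the sequence is (strictly, hence) non-increasing, which is the first alternative. Otherwise let $k$ be the least index with $\rho_k\ge 1$; this index is positive since $\rho_0\le 1$ (and in the degenerate case $\rho_0=1$, where $a_0=a_1$, one simply takes $k=1$, which is then consistent with what follows). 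For $0\le n\le k-1$ minimality of $k$ gives $\rho_n<1$, so $a_0\ge a_1\ge\dots\ge a_{k-1}\ge a_k$, while for $n\ge k$ monotonicity gives $\rho_n\ge\rho_k\ge 1$, so $a_k\le a_{k+1}\le\dots$. This is precisely the second alternative.

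There is no genuine obstacle here; the only point demanding care is the bookkeeping of the index $k$ in part (2), in particular checking that it can be chosen positive as the statement requires and handling the boundary case $a_0=a_1$. As a byproduct the argument also makes the dichotomy sharper: a positive log-convex sequence is either non-increasing throughout, or strictly decreasing up to some index and non-decreasing thereafter, the turning point being governed entirely by where the monotone ratio sequence $\{\rho_n\}$ crosses the level $1$.
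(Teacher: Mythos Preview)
Your proof is correct and follows essentially the same route as the paper: both arguments exploit that log-convexity of a positive sequence is equivalent to the ratios $a_{n+1}/a_n$ being non-decreasing, and then track where this ratio sits relative to~$1$. The paper carries this out as a direct induction on the inequalities $a_n^2\le a_{n-1}a_{n+1}$ rather than naming the ratio sequence $\rho_n$ explicitly, but the mechanics (including the case split in part~(2) on whether a first index with $a_k\le a_{k+1}$ exists) are identical.
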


\begin{proof}[\bf Proof]
(1) First suppose that $a_0\le a_1.$ Then we have $a_1^2\leq a_0
a_2\le a_1 a_2,$ which implies that $a_1\leq a_2.$ Suppose that
$a_{k-1}\leq a_k$ holds for all $k\in\{1,2,\dots,n\}.$ Again from
hypothesis we get $a_{k}^2\leq a_{k-1}a_{k+1}\leq a_{k}a_{k+1},$
which implies that $a_{k}\leq a_{k+1}.$ Thus, the first claim
follows by induction.

(2) Secondly, suppose that $a_1\leq a_0.$ If there exists an index
$k>0$ such that $a_k\leq a_{k+1}$ and does not exist $s<k$ such that
$a_s\leq a_{s+1},$ then we get from hypothesis that $a_{k+1}^2\leq
a_k a_{k+2}\leq a_{k+1}a_{k+2},$ which implies that $a_{k+1}\leq
a_{k+2}.$ By induction for all $n\ge k$ we have that $a_{n}\leq
a_{n+1}.$ We also have $a_n^2\leq a_{n-1}a_{n+1}\le a_{n-1}a_{n}$
for all $n<k,$ which implies that $a_n\leq a_{n-1}$ for all $n<k.$
From these we get the last case.

If there does not exists an index $k>0$ such that $a_k\leq a_{k+1},$
then we get the former case by the same way: for all
$n\in\{1,2,\dots\}$ we have $a_{n}^2\leq a_{n-1} a_{n+1}\leq
a_{n-1}a_{n},$ which implies that $a_{n}\leq a_{n-1}$ for all
$n\in\{1,2,\dots\}.$
\end{proof}

It should be mentioned here that the previous result is related to
the following well-known result: log-concave sequences are unimodal.
Note that a sequence $\{a_n\}_{n\geq0}$ is said to be log-concave if
for all $n\geq1$ we have $a_n^2\geq a_{n-1}a_{n+1}$ and by
definition a sequence $\{a_n\}_{n\geq0}$ is said to be unimodal if
its members rise to a maximum and then decrease, that is, there
exists an index $k>0$ such that $a_0\leq a_1\leq a_2\leq{\dots}\leq
a_k$ and $a_k\geq a_{k+1}\ \geq{\dots}\geq a_n\geq{\dots}.$

We now illustrate our previous result by giving some examples.

\begin{example}
{\rm The power series
$$f_1(x)=\sum_{n\geq0}\frac{2^n+1}{2}x^n=
1+\frac{3}{2}x+\frac{5}{2}x^2+\frac{9}{2}x^3+\dots$$ is of type (1)
considered in Proposition \ref{kasvulasku} since
$$1<\frac{3}{2}<\frac{5}{2}<\frac{9}{2}<{\dots}.$$}
\end{example}

\begin{example}
{\rm The power series
$$f_2(x)={}_2 F_1(1,1;2;x)=-\frac{\log(1-x)}{x}=
1+\frac{x}{2}+\frac{x^2}{3}+\frac{x^3}{4}+\frac{x^4}{5}+\dots$$ and
$$f_3(x)={}_2 F_1\left(\frac{1}{2},\frac{1}{2};1;x\right)=
\sum_{n\geq0}\frac{\left(\frac{1}{2},n\right)\left(\frac{1}{2},n\right)}{(1,n)n!}x^n=1+\frac{1}{4}x+\frac{9}{64}x^2+\frac{25}{256}x^3+\dots$$
are of type (2) considered in Proposition \ref{kasvulasku} since
$$1>\frac{1}{2}>\frac{1}{3}>\frac{1}{4}>\dots\ \ \  \mbox{and}\ \ \ 1>\frac{1}{4}>\frac{9}{64}>\frac{25}{256}>{\dots}.$$}
\end{example}

\begin{example}
{\rm The power series
$$f_4(x)=1+\frac{77}{80}x+\frac{19}{20}x^2+\frac{3}{2}x^3+\frac{5}{2}x^4+\frac{9}{2}x^5+
\sum_{n\geq 6}\frac{2^{n-2}+1}{2}x^n$$ is of type (2) considered in
Proposition \ref{kasvulasku} since
$$1>\frac{77}{80}>\frac{19}{20}<\frac{3}{2}<\frac{5}{2}<\frac{9}{2}<{\dots}.$$}
\end{example}

Now, let us recall some simple properties of log-convex sequences:
the product and sum of log-convex sequences are also log-convex.
Moreover, it is easy to see that log-convexity is stable under term
by term integration in the following sense: if the coefficients of
the power series $f(x)=\sum_{n\geq0}a_nx^n$ form a log-convex
sequence, then coefficients of the series $$g(x)=\frac{1}{x}
\int_0^x f(t) dt=\sum_{n\geq0}\frac{1}{n+1}a_n x^{n}$$ also form a
log-convex sequence and in view of Theorem \ref{Kaluza1} this
implies that the power series $g(x)$ has also the Kaluza sign
property. On the other hand this is not true about differentiation:
if the coefficients of the series $f(x)=\sum_{n\geq0}a_n x^n$ form a
log-convex sequence, then the coefficients of the power series
$$f'(x)=\sum_{n\geq 0}(n+1) a_{n+1}x^n$$ do not form necessarily a log-convex sequence.
Moreover, it can be shown that if the above power series $f(x)$ has
the Kaluza sign property, then the power series $f'(x)$ does not
need to have the Kaluza sign property.

\begin{example}
{\rm The hypergeometric series
$$f_2(x)=1+\frac{x}{2}+\frac{x^2}{3}+\frac{x^3}{4}+\frac{x^4}{5}+\dots$$ has Kaluza's sign property but the series
$$f_2'(x)=\frac{1}{2}
+ \frac{2}{3}x + \frac{3}{4}x^2 + \frac{4}{5}x^3+\dots$$ does not
since
$$\frac{1}{f_2'(x)}=2 - \frac{8}{3}x + \frac{5}{9}x^2+{\dots}.$$
All the same, the power series
$$\frac{1}{x}\int_0^x f_2(t)dt=1+\frac{x}{4}+\frac{x^2}{9}+\frac{x^3}{16}+\frac{x^4}{25}+{\dots}$$
has the Kaluza sign property.}
\end{example}

The following examples show that if the power series $f(x)$ and
$g(x)$ have Kaluza's sign property, then in general it is not true
that the series $f(x)g(x)$ or the quotient $f(x)/g(x)$ would also
have Kaluza's sign property. Furthermore, if the series $f(x)$ has
the Kaluza sign property, then in general the series
$\left[f(x)\right]^{\alpha}$ does not have the Kaluza sign property
if $\alpha>1.$

\begin{example}
{\rm Let $f_1(x), f_2(x)$ be as earlier. The series $f_1(x)f_2(x)$
and $f_2(x)/f_1(x)$ do not have the Kaluza sign property because
$$\frac{1}{f_1(x)f_2(x)}=1 - 2x +\frac{5}{12}x^2 -\frac{1}{6}x^3-\dots$$
and
$$\frac{1}{f_2(x)/f_1(x)}= 1 + x + \frac{5}{3}x^2 +\frac{37}{12} x^3 +\dots$$}
\end{example}

\begin{example}
{\rm The series $\left[f_1(x)\right]^{3}$ and
$\left[f_2(x)\right]^{1.8}$ do not have the Kaluza sign property
because
$$\frac{1}{\left[f_1(x)\right]^{3}}=1 - \frac{9}{2}x + 6 x^2 - \frac{9}{4}x^3+\dots$$
and
$$\frac{1}{\left[f_2(x)\right]^{1.8}}=1 - 0.9 x + 0.03 x^2 - 0.009 x^3-{\dots}.$$}
\end{example}

\begin{example}
{\rm We note that if the sequence $\{a_n\}_{n\geq0}$ is log-convex
and either $a_0\leq a_1\leq a_2\leq \dots$ or $a_0\geq a_1\geq
a_2\geq \dots,$ then the sequence $\{a_n^{\alpha}\}_{n\geq0}$ would
seem to be also log-convex if $0<\alpha\le1.$ However, if there
exists an index $k\ge1$ such that $a_0\ge a_1\ge a_2\ge {\dots}\ge
a_k\le a_{k+1}\le \dots$ then generally the sequence
$\{a_n^{\alpha}\}_{n\geq0}$ is not log-convex if $0<\alpha <1.$ The
series $f_1(x),f_2(x)$ and $f_3(x)$ are all either of type
$a_0<a_1<a_2<\dots$ or of type $a_0>a_1>a_2>{\dots}.$ Numerical
experiments show that the series
$[f_1(x)]^{\alpha},[f_2(x)]^{\alpha}$ and $[f_3(x)]^{\alpha}$ have
the Kaluza sign property at least for the first 20 terms when
$\alpha=0.05k+0.05$ and $k\in\{0,1,\dots,19\}.$

The series $f_4(x)$ is of type
$a_0>a_1>a_2>\dots>a_k<a_{k+1}<{\dots}.$ The series
$\left[f_4(x)\right]^{1/2}$ does not
have the log-convexity property because
$$\frac{1}{\left[f_4(x)\right]^{1/2}}=
1 + \frac{77}{160} x + \frac{18391}{51200} x^2 + \frac{4727893}{8192000} x^3+
\frac{190367203}{209715200}x^4+\cdots $$ 
and $a_3^2 > a_2 a_4\,.$}
\end{example}

Finally, we note that the coefficients of the Maclaurin series
$$f_5(x)=1+\sum_{n\geq1}\frac{x^n}{n}$$ satisfy (\ref{Kaluza2})
for all $n\in\{2,3,\dots\},$ but the reciprocal power series has a
positive coefficient, that is,
$$\frac{1}{f_5(x)}= 1 - x + \frac{1}{2}x^2 - \frac{1}{3}x^3 + {\dots}. $$
Thus, for the Kaluza sign property it is not enough that
\eqref{Kaluza2} holds starting from some index
$n_0\in\{2,3,\dots\}.$ Moreover, it is not easy to find a series
$f(x)$ whose coefficients would not form a log-convex sequence and
in the series $1/f(x)$ all the coefficients except the constant
would be negative. Hence it seems that log-convexity is near of
being necessary.

Motivated by the above discussion we present the following result.

\begin{theorem}\label{lamp}
Let $f(x)=\sum_{n\geq0}a_nx^n$ and $g(x)=\sum_{n\geq0}b_nx^n$ be two
convergent power series such that $a_n,b_n>0$ for all
$n\in\{0,1,\dots\}$ and the sequences $\{a_n\}_{n\geq0},$
$\{b_n\}_{n\geq0}$ are log-convex. Then the following power series
have the Kaluza sign property:
\begin{enumerate}
\item the scalar multiplication $\alpha f(x)=\sum_{n\geq0}(\alpha
a_n)x^n,$ where $\alpha>0;$
\item the sum $f(x)+g(x)=\sum_{n\geq0}(a_n+b_n)x^n;$
\item the linear combination $\alpha f(x)+\beta g(x)=\sum_{n\geq0}(\alpha a_n+\beta
b_n)x^n,$ where $\alpha,\beta>0;$
\item the Hadamard (or convolution) product $f(x)*g(x)=\sum_{n\geq0}a_nb_nx^n;$
\item $u(x)=\sum_{n\geq0}u_nx^n,$ where $u_n=\sum_{k=0}^nC_n^ka_kb_{n-k};$
\item $v(x)=\sum_{n\geq0}v_nx^n,$ where
$v_n=\sum_{k=0}^n\frac{(\alpha,k)(\beta,n-k)}{k!(n-k)!}a_kb_{n-k}$
and $\alpha,\beta>0$ such that $\alpha+\beta=1.$
\end{enumerate}
\end{theorem}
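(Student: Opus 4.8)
The plan is uniform: for each of the six constructions I would show that the resulting coefficient sequence is positive and log-convex, and then quote Theorem \ref{Kaluza1}. Positivity is immediate throughout, since every building block ($a_n$, $b_n$, $\alpha$, $\beta$) and every weight ($C_n^k$, $(\alpha,k)/k!$) is positive. The work is the log-convexity, and for this I would use the three closure properties of the class of positive log-convex sequences already recalled in the paper: closure under multiplication by a positive scalar, under termwise addition, and under termwise (Hadamard) multiplication. Items (1), (2), (4) are then one line each: $\{\alpha a_n\}$, $\{a_n+b_n\}$, $\{a_nb_n\}$ are positive and log-convex, so Theorem \ref{Kaluza1} applies; item (3) follows by combining (1) and (2). (For (1) one may instead simply note that $1/(\alpha f(x))=\alpha^{-1}\cdot 1/f(x)$, so the sign pattern of the reciprocal coefficients is unchanged.)

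Items (5) and (6) are Cauchy-type convolutions, and here I would invoke the additional fact (Davenport--P\'olya) that the Cauchy convolution $\big\{\sum_{k=0}^n c_kd_{n-k}\big\}$ of two positive log-convex sequences $\{c_n\}$, $\{d_n\}$ is again positive and log-convex. For (6) this applies after a rewriting: since $(\alpha,k)/k!$ is the $k$th Taylor coefficient of $(1-x)^{-\alpha}$, we have $v_n=\sum_{k=0}^n p_kq_{n-k}$ with $p_k=\frac{(\alpha,k)}{k!}a_k$ and $q_k=\frac{(\beta,k)}{k!}b_k$. Now $\{(\alpha,k)/k!\}$ is itself positive and log-convex: its consecutive ratio is $(\alpha+k)/(k+1)$, and $\frac{\alpha+k+1}{k+2}-\frac{\alpha+k}{k+1}=\frac{1-\alpha}{(k+1)(k+2)}\ge 0$ because $\alpha+\beta=1$ with $\beta>0$ forces $\alpha\le 1$; likewise for $\beta$. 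Hence $\{p_k\}$ and $\{q_k\}$, being Hadamard products of two positive log-convex sequences, are positive and log-convex, so $\{v_n\}$ is positive and log-convex and Theorem \ref{Kaluza1} gives the claim. Note that the constraint $\alpha+\beta=1$ is exactly what makes these Pochhammer weights log-convex (and also what normalizes $\sum_{k=0}^n\frac{(\alpha,k)(\beta,n-k)}{k!(n-k)!}=1$, by Vandermonde).

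Item (5) is the one that needs real work, since the weight $C_n^k=\frac{n!}{k!(n-k)!}$ is not separable: $u_n$ is the binomial (exponential) convolution, corresponding to multiplying the series $\sum a_nx^n/n!$ and $\sum b_nx^n/n!$ rather than the ordinary generating functions, and so it does not reduce to a Cauchy convolution of modified log-convex sequences. The plan is to prove directly that the binomial convolution of two positive log-convex sequences is positive and log-convex. A guiding heuristic: if $\{a_n\}$ and $\{b_n\}$ happen to be Stieltjes moment sequences, $a_n=\int t^n\,d\mu(t)$ and $b_n=\int s^n\,d\nu(s)$, then $u_n=\int (t+s)^n\,d(\mu\times\nu)$ is again a moment sequence, hence log-convex, which makes the general statement plausible. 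For the genuine log-convex case I would expand $u_{n-1}u_{n+1}-u_n^2$ as a double sum over pairs of indices, symmetrize in the two $a$-indices, and reduce to showing that each pair-contribution is nonnegative; this should follow from the log-convexity of $\{b_n\}$ via an AM--GM estimate on consecutive ratios (as in the Davenport--P\'olya argument) combined with a favorable recombination of the binomial coefficients, the delicate point being the bookkeeping of the mismatched index ranges at the levels $n-1$, $n$, $n+1$. Once $\{u_n\}$ is positive and log-convex, Theorem \ref{Kaluza1} finishes (5). Thus the whole proof rests on routine closure properties plus the Davenport--P\'olya convolution theorem, except for part (5), whose binomial-convolution log-convexity claim is the main obstacle.
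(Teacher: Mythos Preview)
Your plan for items (1)--(4) matches the paper exactly. The trouble is with (5) and (6), where you have the Davenport--P\'olya result backwards. The ordinary Cauchy convolution does \emph{not} preserve log-convexity: take $a_n=b_n=1$ for all $n$, which is trivially log-convex, and observe that the Cauchy product gives $c_n=\sum_{k=0}^n 1=n+1$, for which $(n+1)^2>n(n+2)$; thus $\{c_n\}$ is strictly log-concave. Your reduction of (6) to a Cauchy convolution of the log-convex sequences $p_k=\frac{(\alpha,k)}{k!}a_k$ and $q_k=\frac{(\beta,k)}{k!}b_k$ therefore does not deliver log-convexity of $\{v_n\}$, and that step collapses.

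What Davenport and P\'olya actually prove is precisely item (5): the \emph{binomial} convolution $u_n=\sum_{k=0}^n\binom{n}{k}a_kb_{n-k}$ of two positive log-convex sequences is again log-convex. So the case you singled out as the ``main obstacle'' is exactly the one handled by the reference, and no fresh symmetrization argument is needed. The paper's proof simply cites \cite{dp} for both (5) and (6); for (6) the constraint $\alpha+\beta=1$ is what places that weighted convolution within the scope of their result, and it cannot be recovered from the false Cauchy-convolution claim you invoked.
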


\begin{proof}[\bf Proof]
Since the sequences $\{a_n\}_{n\geq0}$ and $\{b_n\}_{n\geq0}$ are
positive and log-convex, clearly the sequences $\{\alpha
a_n\}_{n\geq0},$ $\{a_n+b_n\}_{n\geq0},$ $\{\alpha a_n+\beta
b_n\}_{n\geq0}$ and $\{a_nb_n\}_{n\geq0}$ are also positive and
log-convex. Moreover, due to Davenport and P\'olya \cite{dp} we know
that the binomial convolution $\{u_n\}_{n\geq0}$, and the sequence
$\{v_n\}_{n\geq0}$ are also log-convex. Thus, applying Kaluza's
Theorem \ref{Kaluza1}, the proof is complete.
\end{proof}

We note that some related results were proved by Lamperti \cite{l},
who proved among others that if the power series $f(x)$ and $g(x)$
in Theorem \ref{lamp} have the Kaluza sign property, then the power
series $f(x)*g(x)$ and $u(x)$ in Theorem \ref{lamp} have also Kaluza
sign property. With other words the convolution and the binomial
convolution preserve the Kaluza sign property. Lamperti's approach
is different from Kaluza's approach and provides a necessary and
sufficient condition for a power series (with the aid of infinite
matrixes) to have the Kaluza sign property.

\section{Kaluza's criterion and the hypergeometric series}

In this section we give examples of cases of hypergeometric series
when the Kaluza sign property either holds or fails. We shall use
the notation
$${}_2F_1(a,b;c;x)= \sum_{n\geq0}\alpha_nx^n,$$
where $$\alpha_n=\frac{(a,n)(b,n)}{(c,n)n!}.$$

\begin{theorem}\label{hyper1}
If $a,b,c>0,$ $2ab(c+1)\leq(a+1)(b+1)c$ and $c\geq a+b-1,$ then the
sequence $\{\alpha_n\}_{n\geq0}$ is positive and log-convex, and
then the Gaussian hypergeometric series ${}_2F_1(a,b;c;x)$ has the
Kaluza sign property.
\end{theorem}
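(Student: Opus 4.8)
The plan is to establish the statement in two steps: first that the coefficient sequence $\{\alpha_n\}_{n\ge0}$ is positive and log-convex, and then to read off the Kaluza sign property from Theorem~\ref{Kaluza1}. Positivity is immediate, since $a,b,c>0$ makes each rising factorial $(a,n)$, $(b,n)$, $(c,n)$ a product of positive numbers, so $\alpha_n>0$ for all $n$; also the radius of convergence of ${}_2F_1(a,b;c;x)$ equals $1>0$ because $c>0$ avoids the non-positive integers. Hence, once $\{\alpha_n\}_{n\ge0}$ is shown to be log-convex, Theorem~\ref{Kaluza1} applies verbatim and finishes the proof.

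For log-convexity, I would use the standard fact that for a positive sequence the conditions $\alpha_n^2\le\alpha_{n-1}\alpha_{n+1}$, $n\ge1$, are equivalent to the ratios $\rho_n:=\alpha_{n+1}/\alpha_n$ being non-decreasing. A direct computation gives $\rho_n=(n+a)(n+b)/\bigl((n+c)(n+1)\bigr)$, so, setting $m=n-1\ge0$, the inequality $\rho_m\le\rho_{m+1}$ reads, after clearing the positive denominators,
$$(m+a)(m+b)(m+c+1)(m+2)\ \le\ (m+a+1)(m+b+1)(m+c)(m+1),\qquad m\ge0.$$
Expanding the difference of the two sides, the degree-four and degree-three terms cancel and what remains is a quadratic $q(m)=\delta m^2+\beta m+\gamma$ with $\delta=c+1-a-b$, $\beta=3c+1-a-b-2ab$ and $\gamma=c(a+1)(b+1)-2ab(c+1)$. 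Clearly $q(m)\ge0$ for all $m\ge0$ as soon as $\delta,\beta,\gamma\ge0$. The hypothesis $c\ge a+b-1$ is exactly $\delta\ge0$, while the hypothesis $2ab(c+1)\le(a+1)(b+1)c$ is exactly $\gamma=q(0)\ge0$ (equivalently, it is the $n=1$ instance of log-convexity).

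The heart of the proof, and the step I expect to be the main obstacle, is showing that the middle coefficient $\beta=3c+1-a-b-2ab$ is non-negative; this requires both hypotheses at once, and I would split into two cases according to the sign of $(a-1)(b-1)$. If $(a-1)(b-1)\le0$, then $c\ge a+b-1$ alone gives $\beta\ge 3(a+b-1)+1-a-b-2ab=-2(a-1)(b-1)\ge0$. If $(a-1)(b-1)>0$, one first observes that $2ab(c+1)\le(a+1)(b+1)c$ rewrites as $c\,(a+b+1-ab)\ge 2ab$, which (since $c>0$) forces $a+b+1-ab>0$ and hence $c\ge 2ab/(a+b+1-ab)$; substituting this into $\beta$ and multiplying by the positive number $a+b+1-ab$ yields
$$\beta\,(a+b+1-ab)\ \ge\ 6ab+(1-a-b-2ab)(a+b+1-ab)\ =\ 2(a-1)(b-1)\Bigl(ab+\tfrac{a+b+1}{2}\Bigr)\ >\ 0,$$
so $\beta>0$ in this case too.

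With $\delta,\beta,\gamma\ge0$ we obtain $q(m)\ge0$ for every $m\ge0$, so $\{\alpha_n\}_{n\ge0}$ is log-convex, and Theorem~\ref{Kaluza1} yields the Kaluza sign property for ${}_2F_1(a,b;c;x)$. The only real work here is the elementary bookkeeping in expanding $q(m)$ and in checking the two displayed algebraic identities for $\beta$; conceptually nothing more is needed.
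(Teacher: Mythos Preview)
Your proof is correct and follows essentially the same route as the paper's: both reduce log-convexity to the nonnegativity of the same quadratic (your $q(m)$ is the paper's $W(n)$ under $m=n-1$), and both handle the middle coefficient $\beta=3c+1-a-b-2ab$ via the same case split on the sign of $(a-1)(b-1)$ (equivalently $a+b-1-ab$). Your presentation is in fact slightly more direct, since you simply check $\delta,\beta,\gamma\ge0$, whereas the paper first invokes $n^2\ge 2n-1$ to reduce $W(n)$ to a linear expression before establishing the very same inequalities.
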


\begin{proof}[\bf Proof]
To show that the sequence $\{ \alpha_n\}_{n\ge 0}$ is log-convex we
just need to prove that for all $n\in\{1,2,\dots\}$
$$\frac{(a,n)^2(b,n)^2}{(c,n)^2{(n!)}^2}\le\frac{(a,n-1)(b,n-1)}{(c,n-1)(n-1)!}\frac{(a,n+1)(b,n+1)}{(c,n+1)(n+1)!}$$
or equivalently
$$\frac{(a+n-1)(b+n-1)}{(c+n-1)n}<\frac{(a+n)(b+n)}{(c+n)(n+1)}.$$
Now, this is equivalent to the inequality for the second degree
polynomial $$W(n)=w_1 n^2+ w_2n+w_3\ge0,$$ where
$$\left\{\begin{array}{ll}w_1=c+1-a-b\\ w_2=a+b+c-2ab-1\\
w_3=ac+bc-abc-c\end{array}\right.$$ and $n\in\{1,2,\dots\}.$ If
$w_1\ge 0,$ i.e. $c\ge a+b-1,$ then in view of $n^2\geq 2n-1,$ we
obtain that $$W(n)\geq (3c-a-b-2ab+1)n+(ac+bc-abc-2c+a+b-1).$$
Observe that if we suppose $a+b-1-ab>0,$ then $c\geq
a+b-1>(a+b+2ab-1)/3$ and this together with
$2ab(c+1)\leq(a+1)(b+1)c$ imply
\begin{equation}\label{eqwn}W(n)\geq c(a+b-ab+1)-2ab\ge0.\end{equation} On the other hand, if we have
$a+b-1-ab\leq0,$ then because of $2ab(c+1)\leq(a+1)(b+1)c$ we obtain
$a+b+1-ab\geq 2ab/c>0$ and then
$$c\geq\frac{2ab}{a+b+1-ab}\geq ab\geq \frac{a+b+2ab-1}{3},$$ which
implies again \eqref{eqwn}. This completes the proof.
\end{proof}

The next result shows that the condition $2ab(c+1)\leq(a+1)(b+1)c$
in the above theorem is not only sufficient, but even necessary.

\begin{theorem}\label{hyper2}
If $a,b,c>0$ and $2ab(c+1)>(a+1)(b+1)c,$ then the hypergeometric
series ${}_2F_1(a,b;c;x)$ does not have the Kaluza sign property.
\end{theorem}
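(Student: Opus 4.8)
The plan is to show that already the coefficient of $x^{2}$ in the reciprocal series is strictly positive, which by itself destroys the Kaluza sign property. Since $a,b,c>0$, every coefficient $\alpha_{n}=(a,n)(b,n)/\bigl((c,n)n!\bigr)$ is positive, so $\alpha_{0}=1\neq 0$ and the reciprocal power series $1/{}_2F_1(a,b;c;x)=\sum_{n\ge0}b_{n}x^{n}$ is well defined (and has positive radius of convergence, since ${}_2F_1(a,b;c;\cdot)$ is continuous with value $1$ at $0$). From the recursion for the coefficients of a reciprocal series recalled in the Introduction, equivalently from Proposition \ref{ratkaiseminen}, one reads off $b_{0}=1$, $b_{1}=-\alpha_{1}$, and
$$b_{2}=-\bigl(\alpha_{2}b_{0}+\alpha_{1}b_{1}\bigr)=\alpha_{1}^{2}-\alpha_{2}.$$

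Next I would substitute the explicit values $\alpha_{1}=ab/c$ and $\alpha_{2}=a(a+1)b(b+1)/\bigl(2c(c+1)\bigr)$, obtaining
$$b_{2}=\frac{a^{2}b^{2}}{c^{2}}-\frac{a(a+1)b(b+1)}{2c(c+1)}=\frac{ab}{2c^{2}(c+1)}\Bigl(2ab(c+1)-(a+1)(b+1)c\Bigr).$$
Because $a,b,c>0$, the prefactor $ab/\bigl(2c^{2}(c+1)\bigr)$ is strictly positive, so $b_{2}>0$ if and only if $2ab(c+1)>(a+1)(b+1)c$ — which is precisely the hypothesis. Hence $b_{2}>0$: the reciprocal series has a positive coefficient different from its constant term, and therefore ${}_2F_1(a,b;c;x)$ fails to have the Kaluza sign property. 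In particular this shows that the condition $2ab(c+1)\le(a+1)(b+1)c$ of Theorem \ref{hyper1} is necessary for the Kaluza sign property.

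There is essentially no obstacle: the argument is a two-term computation of the reciprocal series, and its whole content is the observation that $b_{2}=\alpha_{1}^{2}-\alpha_{2}$ and that the inequality $2ab(c+1)>(a+1)(b+1)c$ is exactly the failure of the log-convexity inequality $\alpha_{1}^{2}\le\alpha_{2}$ at the first index — the borderline already visible in the strict ``$<$'' appearing in the proof of Theorem \ref{hyper1}. The only point requiring a little care is keeping track of the inequality direction when factoring out $ab/c>0$; note also that a single positive non-constant coefficient suffices, so there is no need to analyze $b_{3},b_{4},\dots$.
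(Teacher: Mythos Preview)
Your proof is correct and follows essentially the same approach as the paper: compute the coefficient of $x^{2}$ in the reciprocal series via the recursion of Proposition~\ref{ratkaiseminen}, obtain $b_{2}=\alpha_{1}^{2}-\alpha_{2}=\dfrac{ab}{2c^{2}(c+1)}\bigl(2ab(c+1)-(a+1)(b+1)c\bigr)$, and observe that the hypothesis makes this strictly positive. The paper stops at the factored form $\dfrac{ab}{c}\Bigl(\dfrac{ab}{c}-\dfrac{(a+1)(b+1)}{2(c+1)}\Bigr)$, but this is the same quantity and the same argument.
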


\begin{proof}[\bf Proof]
Suppose that the coefficient $a_n$ are defined by
$$ \frac{1}{\sum_{n\ge 0}\frac{(a,n)(b,n)}{(c,n)n!}x^n}=1+a_1 x+a_2 x^2+a_3 x^3+{\dots}.$$
Then $$a_1=-\frac{ab}{c},\
a_2=-\frac{ab}{c}a_1-\frac{a(a+1)b(b+1)}{c(c+1)2}=\frac{ab}{c}
\left(\frac{ab}{c}-\frac{(a+1)(b+1)}{(c+1)2}\right).$$ We shall only
look at the sign of $a_2.$ If $a_2>0$ then ${}_2F_1(a,b;c;x)$ does
not have Kaluza's sign property. With this the proof is complete.
\end{proof}

For Theorem \ref{hyper2} we now give an illuminating example.

\begin{example}
{\rm If we consider the hypergeometric series $${}_2F_1(3,3;6;x)=1 +
\frac{3}{2}x + \frac{12}{7}x^2 + \frac{25}{14}x^3+
\frac{25}{14}x^4+\dots$$ and look at its reciprocal series we get a
positive coefficient different from a constant term
$$\frac{1}{{}_2F_1(3,3;6;x)}=1 - \frac{3}{2}x + \frac{15}{28}x^2 - \frac{1}{56}x^3 +{\dots}.$$}
\end{example}

Next we are going to present a counterpart of Theorem \ref{hyper1}.
To do this we first recall the following result of Jurkat \cite{j}.

\begin{theorem}\label{thjurk}
Let us consider the power series $p(x)=\sum_{n\geq0}p_nx^n$ and
$q(x)=\sum_{n\geq0}q_nx^n,$ where $p_0>0$ and the sequence
$\{p_n\}_{n\geq0}$ is decreasing. If for all $n\in\{1,2,\dots\}$
\begin{equation}\label{jurkateq}\overline{\Delta}q_n\geq
\frac{q_0}{p_0}\overline{\Delta}p_n,\end{equation} where
$\overline{\Delta}a_n=a_n-a_{n-1}$ for all $n\in\{1,2,\dots\},$
$\overline{\Delta}a_0=a_0,$ then the coefficients of the power
series $k(x)=q(x)/p(x)=\sum_{n\geq0}k_nx^n$ satisfies $k_n\geq0$ for
all $n\in\{1,2,\dots\}.$ Moreover, if \eqref{jurkateq} is reversed,
then $k_n\leq0$ for all $n\in\{1,2,\dots\}.$
\end{theorem}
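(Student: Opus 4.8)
The plan is to reduce the statement to the elementary fact that a power series of the form $\beta_0-\sum_{n\geq1}\beta_nx^n$ with $\beta_0>0$ and $\beta_n\geq0$ has a reciprocal all of whose coefficients are non-negative, and then to exhibit $k(x)$ as (a shift of) a product of such ``non-negative'' series. First I would record that the hypothesis in fact also holds at $n=0$, with equality: since $\overline{\Delta}q_0=q_0=(q_0/p_0)p_0=(q_0/p_0)\overline{\Delta}p_0$, setting $c=q_0/p_0$ (which will be $k_0$) we get $\overline{\Delta}q_n-c\,\overline{\Delta}p_n\geq0$ for every $n\geq0$, vanishing at $n=0$.

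Second, I would use the telescoping identity $(1-x)\sum_{n\geq0}a_nx^n=\sum_{n\geq0}(\overline{\Delta}a_n)x^n$, valid for any power series with the convention $a_{-1}=0$. Applying it to $q-c\,p$ gives $(1-x)\bigl(q(x)-c\,p(x)\bigr)=\sum_{n\geq0}\bigl(\overline{\Delta}q_n-c\,\overline{\Delta}p_n\bigr)x^n$; by the first step the constant term is $0$ and every remaining coefficient is non-negative, so this power series equals $x\,r(x)$ with $r(x)=\sum_{n\geq0}r_nx^n$, $r_n=\overline{\Delta}q_{n+1}-c\,\overline{\Delta}p_{n+1}\geq0$. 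Dividing by $(1-x)p(x)$ yields
$$k(x)-c=\frac{x\,r(x)}{(1-x)p(x)}.$$

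Third, I would analyze $1/\bigl((1-x)p(x)\bigr)$. Since $\{p_n\}_{n\geq0}$ is decreasing and $p_0>0$, the telescoping identity gives $(1-x)p(x)=p_0-\sum_{n\geq1}d_nx^n$ with $d_n=p_{n-1}-p_n\geq0$. Writing $1/\bigl((1-x)p(x)\bigr)=\sum_{n\geq0}s_nx^n$ and comparing coefficients gives $s_0=1/p_0>0$ and $p_0s_n=\sum_{j=1}^nd_js_{n-j}$ for $n\geq1$, whence $s_n\geq0$ for all $n$ by induction. Therefore $k(x)-c=x\,r(x)\sum_{n\geq0}s_nx^n$ is, up to the shift by $x$, the Cauchy product of two power series with non-negative coefficients, so $k_n\geq0$ for every $n\geq1$ (while $k_0=c=q_0/p_0$). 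If the inequality in the hypothesis is reversed, then the $r_n$ become non-positive while the $s_n$ are unchanged, and the same computation gives $k_n\leq0$ for every $n\geq1$.

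All of these steps are routine once the factorization is in hand; the single point needing a little care — which I would flag as the main obstacle — is legitimizing the passage to the reciprocal series $1/\bigl((1-x)p(x)\bigr)$ and the rearrangement of the double sum. This is harmless: since $p_0\neq0$, the function $1/p(x)$ is analytic at the origin, hence $1/\bigl((1-x)p(x)\bigr)$ genuinely has a convergent power series expansion near $0$, and all the coefficient identities above are then valid in a neighbourhood of the origin, which is all that is needed.
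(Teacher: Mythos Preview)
Your proof is correct. The paper does not give a self-contained proof of this theorem --- it is quoted from Jurkat --- but in the discussion a few paragraphs later it recapitulates Jurkat's argument: from the Cauchy relation $q_n=\sum_{i=0}^{n}k_ip_{n-i}$ one forms the difference $q_n-q_{n-1}$, isolates
\[
k_np_0=\overline{\Delta}q_n-\frac{q_0}{p_0}\,\overline{\Delta}p_n-\sum_{i=1}^{n-1}k_i\,(p_{n-i}-p_{n-i-1}),
\]
and concludes by induction on $n$, the decreasing hypothesis on $\{p_n\}$ ensuring each term in the sum has the right sign. Your argument takes a genuinely different route: you multiply by $(1-x)$ and factor $k(x)-q_0/p_0$ as $x\,r(x)\cdot\bigl((1-x)p(x)\bigr)^{-1}$, where $r$ has non-negative (resp.\ non-positive) coefficients directly from the hypothesis, and the second factor has non-negative coefficients because $(1-x)p(x)=p_0-\sum_{n\geq1}(p_{n-1}-p_n)x^n$ with $p_{n-1}-p_n\geq0$. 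This structural reformulation makes the mechanism transparent --- a Cauchy product of sign-definite series shifted by $x$ --- and handles both directions of the theorem uniformly without any inductive bookkeeping on the $k_i$. Jurkat's direct induction is a little shorter and avoids introducing the auxiliary reciprocal series; note, however, that your short induction establishing $s_n\geq0$ for $1/\bigl((1-x)p(x)\bigr)$ is exactly Jurkat's recurrence specialised to $q\equiv1$, so the two arguments are closer cousins than they first appear.
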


Note that the first part of the above result is \cite[Theorem 4]{j},
while the second is \cite[Theorem 5]{j}. First, let us consider in
the above theorem $q_0=1$ and $q_n=0$ for all $n\in\{1,2,\dots\}$ to
have $k(x)=1/p(x),$ as in \cite[Theorem 3]{j}. Then the condition
$q_n-q_{n-1}\geq (q_0/p_0)(p_n-p_{n-1}),$ i.e. \eqref{jurkateq} for
$n=1$ means that $p_1\leq0$ and for $n\in\{2,3,\dots\}$ means that
$p_n\leq p_{n-1}.$ Thus, we obtain the following result.

\begin{proposition}\label{propo}
If $a_0>0\geq a_1\geq a_2\geq{\dots}\geq a_n\geq{\dots},$ then the
reciprocal of the power series $f(x)=\sum_{n\geq 0}a_nx^n$ has all
coefficients non-negative. More precisely, if
$1/f(x)=\sum_{n\geq0}b_nx^n,$ then $b_n\geq0$ for all
$n\in\{0,1,\dots\}.$
\end{proposition}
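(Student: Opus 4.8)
The plan is to obtain Proposition~\ref{propo} as a direct specialization of Jurkat's Theorem~\ref{thjurk}, along the lines sketched just before the statement. I would apply that theorem with $p(x)=f(x)$, so that $p_n=a_n$, and with $q(x)\equiv 1$, so that $q_0=1$ and $q_n=0$ for all $n\in\{1,2,\dots\}$; then $k(x)=q(x)/p(x)=1/f(x)=\sum_{n\geq0}b_nx^n$ and the coefficients $k_n$ produced by the theorem are exactly the $b_n$ we must control. The standing hypotheses of Theorem~\ref{thjurk} are that $p_0>0$ and that $\{p_n\}_{n\geq0}$ is decreasing; both are immediate, since $p_0=a_0>0$ and the chain $a_0>0\geq a_1\geq a_2\geq\dots$ gives $a_0>a_1\geq a_2\geq\dots$.

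It remains to check the inequality~\eqref{jurkateq}, namely $\overline{\Delta}q_n\geq\tfrac{q_0}{p_0}\,\overline{\Delta}p_n$, for every $n\in\{1,2,\dots\}$. The one step that needs a little care is $n=1$, where the convention $\overline{\Delta}q_0=q_0$ enters: there $\overline{\Delta}q_1=q_1-q_0=-1$ while $\tfrac{q_0}{p_0}\,\overline{\Delta}p_1=\tfrac{1}{a_0}(a_1-a_0)$, so~\eqref{jurkateq} for $n=1$ is equivalent to $a_1\leq 0$, which holds by hypothesis. For $n\geq2$ one has $\overline{\Delta}q_n=q_n-q_{n-1}=0$, whereas $\tfrac{q_0}{p_0}\,\overline{\Delta}p_n=\tfrac{1}{a_0}(a_n-a_{n-1})\leq 0$ because $a_0>0$ and $\{a_n\}_{n\geq0}$ is decreasing; thus~\eqref{jurkateq} holds for all $n$. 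The first part of Theorem~\ref{thjurk} then yields $b_n=k_n\geq 0$ for all $n\in\{1,2,\dots\}$, and since $b_0=1/a_0>0$ this gives $b_n\geq 0$ for every $n\in\{0,1,\dots\}$, which is the claim.

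I do not anticipate a genuine obstacle: the argument amounts to bookkeeping on a theorem already recorded in the paper, and the only subtlety is matching the $n=1$ instance of~\eqref{jurkateq} to the hypothesis $a_1\leq 0$. One can in fact bypass Theorem~\ref{thjurk} entirely and argue by induction on the convolution recursion $a_0b_n=-\sum_{k=1}^{n}a_kb_{n-k}$, valid for $n\geq1$: once $b_0,\dots,b_{n-1}$ are known to be nonnegative, the expression $-\sum_{k=1}^{n}a_kb_{n-k}=\sum_{k=1}^{n}(-a_k)b_{n-k}$ is a sum of nonnegative terms since $-a_k\geq0$ for $k\geq1$, so $a_0b_n\geq0$ and hence $b_n\geq0$; with the base case $b_0=1/a_0>0$ the induction is complete. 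This shorter route uses only that $a_0>0$ and $a_n\leq0$ for $n\geq1$, not the monotonicity of the tail $\{a_n\}_{n\geq1}$.
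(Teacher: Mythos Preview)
Your main argument is correct and coincides with the paper's own derivation: the proposition is obtained there precisely by specializing Theorem~\ref{thjurk} to $q(x)\equiv 1$, with the same identification that the $n=1$ instance of~\eqref{jurkateq} amounts to $a_1\leq 0$ and the $n\geq 2$ instances to $a_n\leq a_{n-1}$. Your additional direct induction on the convolution recursion is a valid and pleasant bonus not present in the paper, and as you note it shows that the monotonicity of the tail $\{a_n\}_{n\geq 1}$ is not actually needed for the conclusion.
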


By using the above result we may get the following.

\begin{theorem}
If $a,b,c>-1,$ $c\neq0,$ $ab/c\leq0,$ and $c\leq\min\{a+b-1,ab\},$
then the reciprocal of the series ${}_2F_1(a,b;c;x)$ has all
coefficients non-negative, that is, we have
$1/{}_2F_1(a,b;c;x)=1+\sum_{n\geq1}\beta_nx^n$ with $\beta_n\geq0$
for all $n\in\{1,2,\dots\}.$
\end{theorem}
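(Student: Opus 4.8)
The plan is to invoke Proposition~\ref{propo} with $f(x)={}_2F_1(a,b;c;x)=\sum_{n\geq0}\alpha_nx^n$, so it suffices to verify the hypothesis $\alpha_0>0\geq\alpha_1\geq\alpha_2\geq\dots$. Since $\alpha_0=1>0$, the content of the proof is the two claims: (i) $\alpha_1\leq0$, and (ii) $\alpha_n\leq\alpha_{n-1}$ for all $n\in\{1,2,\dots\}$. Claim (i) is immediate: $\alpha_1=ab/c\leq0$ by hypothesis. For claim (ii), I would work with the standard ratio $\alpha_n/\alpha_{n-1}=(a+n-1)(b+n-1)/\big((c+n-1)n\big)$, valid for $n\geq1$; the conditions $a,b,c>-1$ guarantee $a+n-1>0$, $b+n-1>0$, $c+n-1>0$ and $n>0$ for all such $n$, so the ratio is well-defined and positive. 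Hence the $\alpha_n$ do not change sign after the first step, and $\alpha_n\leq\alpha_{n-1}$ is equivalent to $|\alpha_n|\leq|\alpha_{n-1}|$ for $n\geq2$, i.e.\ to
$$(a+n-1)(b+n-1)\leq (c+n-1)n\qquad(n\in\{2,3,\dots\}),$$
together with the separate check $\alpha_1\leq\alpha_0$, i.e.\ $ab/c\leq1$, which holds since $ab/c\leq0\leq1$.

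The heart of the argument is therefore the polynomial inequality $(c+n-1)n-(a+n-1)(b+n-1)\geq0$ for $n\geq2$. Expanding, this is a quadratic in $n$ with positive leading coefficient:
$$P(n)=(c+1-a-b)\,n^2+(a+b+c-ab-1-?)\dots$$
— rather than chase the exact coefficients here, the structure is that the $n^2$-coefficient is $c-(a+b-1)\geq0$ by the assumption $c\leq a+b-1$ reversed... wait: note $c\leq a+b-1$ makes that coefficient $\le 0$, so I would instead exploit $c\leq ab$. Concretely, rewrite $(c+n-1)n-(a+n-1)(b+n-1)=n^2-n+cn-\big(n^2+(a+b-2)n+(a-1)(b-1)\big)=(c-a-b+1)n-(a-1)(b-1)$, a linear (not quadratic) function of $n$ since the $n^2$ terms cancel. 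So the inequality to prove is simply
$$(c-a-b+1)\,n\geq (a-1)(b-1)\qquad(n\geq2).$$

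Now the two hypotheses $c\leq a+b-1$ and $c\leq ab$ come into play. If $c\le a+b-1$ then $c-a-b+1\le 0$, so the left side is a nonincreasing function of $n$ and it suffices to verify the inequality at... no, with a nonpositive slope we need the inequality for \emph{large} $n$, which fails unless $c=a+b-1$. This tension is the main obstacle, and it signals that I must instead read the displayed ratio inequality the other way: for $n\geq2$ one needs $\alpha_n/\alpha_{n-1}\le 1$ \emph{in absolute value}, but actually since consecutive $\alpha_n$ (for $n\ge1$) have constant sign, $\alpha_n\le\alpha_{n-1}$ with both negative means $\alpha_n/\alpha_{n-1}\ge 1$. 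So the correct inequality is $(a+n-1)(b+n-1)\ge (c+n-1)n$, i.e.\ $(c-a-b+1)n\le (a-1)(b-1)$, equivalently $(a+b-1-c)n\ge -(a-1)(b-1)=(1-a)(1-b)$. Since $a+b-1-c\ge0$ by hypothesis, the left side is nondecreasing in $n$, so it suffices to check $n=2$: $2(a+b-1-c)\ge (1-a)(1-b)$. Here I would use $c\le ab$: then $a+b-1-c\ge a+b-1-ab=-(1-a)(1-b)$, so $2(a+b-1-c)\ge -2(1-a)(1-b)$, and it remains to see $-2(1-a)(1-b)\ge (1-a)(1-b)$, i.e.\ $(1-a)(1-b)\le 0$. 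This last fact should follow from $ab/c\le 0$ together with $c>-1$ and the sign constraints — that case analysis (the signs of $a,b,c$ forced by $ab/c\le 0$ and $c\le ab$, $c\le a+b-1$) is the genuinely delicate part and is where I would spend the most care, splitting into $c>0$ (forcing $ab\le 0$, hence one of $a,b$ in $(-1,0]$ and the other nonnegative, making $(1-a)(1-b)\le 0$ only if one factor is $\le 0$, which needs $a\ge1$ or $b\ge1$ — so further subcases) versus $c<0$ (forcing $ab\ge0$). I expect that in every admissible configuration the chain closes, but verifying the boundary subcases cleanly is the step most likely to need a careful separate treatment or a slight sharpening of how the hypotheses are combined.
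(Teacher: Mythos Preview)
Your strategy---invoke Proposition~\ref{propo} and verify $\alpha_0>0\ge\alpha_1\ge\alpha_2\ge\cdots$---is exactly the paper's. The difficulty is a single sign slip that sends the final step off course. You write $-(a-1)(b-1)=(1-a)(1-b)$, but $(a-1)(b-1)=(1-a)(1-b)$, so in fact $-(a-1)(b-1)=-(1-a)(1-b)=a+b-1-ab$. With the correct sign your reduced inequality is
\[
(a+b-1-c)\,n \;\ge\; a+b-1-ab \qquad (n\ge 2),
\]
and this is immediate from the hypotheses: the slope $a+b-1-c\ge 0$ (since $c\le a+b-1$), and already at $n=1$ the inequality reads $-c\ge -ab$, i.e.\ $c\le ab$, which is assumed. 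No case analysis is needed.

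By contrast, the inequality you set out to prove, $(1-a)(1-b)\le 0$, is false under the stated hypotheses: take $a=b=\tfrac12$, $c=-\tfrac12$; then $ab/c=-\tfrac12\le 0$, $c\le a+b-1=0$, $c\le ab=\tfrac14$, yet $(1-a)(1-b)=\tfrac14>0$. So the ``delicate case analysis'' you anticipated cannot close. Two smaller points: your claim that $a+n-1,b+n-1,c+n-1>0$ for all $n\ge 1$ fails at $n=1$ (only $>-1$ is given), though it is correct for $n\ge 2$, which is all you use; and the separate check $\alpha_1\le\alpha_0$ is unnecessary, since Proposition~\ref{propo} requires only $\alpha_1\le 0$, not $\alpha_1\le\alpha_0$.

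The paper avoids the ratio manipulations by writing the difference directly,
\[
\alpha_n-\alpha_{n+1}=\frac{(a,n)(b,n)}{(c,n+1)(n+1)!}\Bigl((c+n)(n+1)-(a+n)(b+n)\Bigr),
\]
observing that for $n\ge 1$ the prefactor has the sign of $ab/c\le 0$ (since $(a,n),(b,n),(c,n+1)$ inherit the signs of $a,b,c$ when $a,b,c>-1$), and then reducing to the same linear inequality $(a+b-c-1)n+ab-c\ge 0$.
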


\begin{proof}[\bf Proof]
Clearly $\alpha_0=1>0$ and $\alpha_1=ab/c\leq0.$ The condition
$\alpha_n\geq\alpha_{n+1}$ holds for all $n\in\{1,2,\dots\}$ if and
only if we have
$$\frac{(a,n)(b,n)}{(c,n+1)(n+1)!}\left((c+n)(n+1)-(a+n)(b+n)\right)\geq0$$
for all $n\in\{0,1,\dots\}.$ Now, because $a,b,c>-1,$ $c\neq0$ and
$ab/c\leq0,$ for all $n\in\{0,1,\dots\}$ we should have
$$(a+b-c-1)n+ab-c\geq0$$
Applying Proposition \ref{propo}, the result follows.
\end{proof}

Now, let us focus on the second part of Theorem \ref{thjurk}, i.e.
\cite[Theorem 5]{j}. Consider again $q_0=1$ and $q_n=0$ for all
$n\in\{1,2,\dots\}$ to have $k(x)=1/p(x),$ as above. Then the
condition $q_n-q_{n-1}\leq (q_0/p_0)(p_n-p_{n-1})$ for $n=1$ means
that $p_1\geq0$ and for $n\in\{2,3,\dots\}$ means that $p_n\geq
p_{n-1},$ which contradicts condition \cite[Eq. (6)]{j}, i.e. the
hypothesis that the sequence $\{p_n\}_{n\geq0}$ is decreasing.
However, following the proof of \cite[Theorem 4]{j}, it is easy to
see that to have a correct version of \cite[Theorem 5]{j} we need to
assume that the sequence $\{q_n\}_{n\geq0}$ is strictly decreasing.
More precisely, with the notation of Theorem \ref{thjurk} we have
$$q_n=\sum_{i=0}^{n}k_ip_{n-i},$$
and then
$$q_n-q_{n-1}=k_0(p_n-p_{n-1})+\sum_{i=1}^{n-1}k_i(p_{n-i}-p_{n-i-1})+k_np_0.$$
which can be rewritten in the form
$$k_np_0=\overline{\Delta}q_n-\frac{q_0}{p_0}\overline{\Delta}p_n-\sum_{i=1}^{n-1}k_i(p_{n-i}-p_{n-i-1}).$$
Now, suppose that $k_1,k_2,\dots,k_{n-1}\leq0.$ Since
$\{p_n\}_{n\geq0}$ is decreasing, we obtain
$$k_np_0\leq\overline{\Delta}q_n-\frac{q_0}{p_0}\overline{\Delta}p_n$$
which is clearly non-positive if the reversed form of
\eqref{jurkateq} holds. However, here it is very important to note
that if $\overline{\Delta}q_n\geq0,$ then the right-hand side of the
above expression is non-negative. Summarizing, in the second part of
Theorem \ref{thjurk} we need to suppose that the sequence
$\{q_n\}_{n\geq0}$ is strictly decreasing.

\section{The monotonicity of the quotient of two hypergeometric series}

The next result, due to M. Biernacki and and J. Krzy\.z, has found
numerous applications during the past decade. For instance in
\cite{hvv} the authors give a variant of Theorem \ref{hyper3} where
the numerator and denominator Maclaurin series are replaced with
polynomials of the same degree. See also \cite{baricz} for an
alternative proof of Theorem \ref{hyper3} and \cite{bar} for some
interesting applications.

\begin{theorem}\label{hyper3}
Suppose that the power series $f(x)=\sum_{n\geq0}a_nx^n$ and
$g(x)=\sum_{n\geq0}b_n x^n$ have the radius of convergence $r>0$ and
$b_n>0$ for all $n\in\{0,1,\dots\}.$ Then the function $x\mapsto
{f(x)}/{g(x)}$ is increasing (decreasing) on $(0,r)$ if the sequence
$\{a_n/b_n\}_{n\geq0}$ is increasing (decreasing).
\end{theorem}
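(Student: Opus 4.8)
The plan is to prove the monotonicity of $x\mapsto f(x)/g(x)$ on $(0,r)$ by computing its derivative and showing that the numerator has the appropriate sign. Writing
$$\left(\frac{f(x)}{g(x)}\right)' = \frac{f'(x)g(x)-f(x)g'(x)}{g(x)^2},$$
and noting that $g(x)>0$ on $(0,r)$ since all $b_n>0$, it suffices to show that $h(x):=f'(x)g(x)-f(x)g'(x)$ is nonnegative (nonpositive) on $(0,r)$ under the hypothesis that $\{a_n/b_n\}$ is increasing (decreasing). I would first treat the increasing case; the decreasing case follows by applying the result to $-f$, or symmetrically.

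The key step is to expand $h(x)$ as a power series and identify its coefficients. Using the Cauchy product rule \eqref{Cauchy}, one has $f'(x)g(x)=\sum_{n\geq0}\left(\sum_{k=0}^{n}(k+1)a_{k+1}b_{n-k}\right)x^n$ and $f(x)g'(x)=\sum_{n\geq0}\left(\sum_{k=0}^{n}a_{n-k}(k+1)b_{k+1}\right)x^n$. After reindexing so that both sums run over the same pairs of indices, the coefficient of $x^n$ in $h(x)$ becomes a double sum that can be symmetrized. The standard manipulation is to write the coefficient of $x^{n-1}$ in $h(x)$ as
$$c_n=\sum_{i+j=n,\ i\geq1}i\,a_i b_j-\sum_{i+j=n,\ j\geq1}j\,a_i b_j = \sum_{\substack{i+j=n}} (i-j)\,a_i b_j,$$
and then pair the term $(i,j)$ with the term $(j,i)$, which gives
$$c_n=\sum_{\substack{i+j=n\\ i>j}} (i-j)\bigl(a_i b_j - a_j b_i\bigr)=\sum_{\substack{i+j=n\\ i>j}} (i-j)\,b_i b_j\left(\frac{a_i}{b_i}-\frac{a_j}{b_j}\right).$$
Since $i>j$ implies $a_i/b_i\geq a_j/b_j$ by the monotonicity hypothesis, and $b_ib_j>0$ and $i-j>0$, every term is nonnegative, so $c_n\geq0$ for all $n$. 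Hence $h(x)=\sum_{n\geq1}c_n x^{n-1}\geq0$ on $(0,r)$ (indeed the series converges there because $h$ is a product of functions analytic on $|x|<r$), and the quotient is increasing.

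The main obstacle I anticipate is purely bookkeeping: getting the reindexing of the double sums exactly right so that the antisymmetric factor $(i-j)$ and the factor $(a_ib_j-a_jb_i)$ emerge cleanly, and making sure the diagonal term $i=j$ (which contributes zero) is handled correctly whether $n$ is even or odd. There is no analytic subtlety — convergence of $h(x)$ on $(0,r)$ is immediate since $f$, $f'$, $g$, $g'$ all have radius of convergence $r$ — so the entire content is the sign analysis of the coefficients $c_n$. One should also remark that if $\{a_n/b_n\}$ is strictly increasing and not eventually constant, the quotient is strictly increasing; but for the stated theorem the weak version suffices.
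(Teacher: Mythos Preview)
Your argument is correct: the quotient-rule computation, the reindexing leading to $c_n=\sum_{i+j=n}(i-j)a_ib_j$, and the symmetrization
\[
c_n=\sum_{\substack{i+j=n\\ i>j}}(i-j)\,b_ib_j\!\left(\frac{a_i}{b_i}-\frac{a_j}{b_j}\right)
\]
are all accurate, and the sign conclusion follows exactly as you say. Note, however, that the paper does not supply its own proof of this theorem; it is quoted as the Biernacki--Krzy\.z criterion with references to \cite{bk} and to \cite{baricz} for an alternative proof. Your derivation is essentially the classical one found in those references, so there is nothing to contrast---you have reproduced the standard argument that the paper cites rather than proves.
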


Now, with the help of Theorem \ref{hyper3} we prove the following,
which completes \cite[Theorem 3.8]{hvv}.

\begin{theorem}\label{hyper4}
Let $a_1,a_2,b_1,b_2,c_1,c_2$ be positive numbers. Then the series
$$x\mapsto q(x)=\frac{{}_2F_1(a_1,b_1;c_1;x)}{{}_2F_1(a_2,b_2;c_2;x)}=
\frac{r_0 + r_1 x + r_2 x^2+\dots}{s_0 + s_1 x + s_2 x^2+\dots}$$
is increasing on $(0,1)$ if one of the following conditions holds
\begin{enumerate}
\item[(1)] $a_1 \ge a_2,$ $b_1 \ge b_2$ and $c_2 \ge c_1.$
\item[(2)] $a_1+b_1 \ge a_2+b_2,$ $c_2\ge c_1$ and $a_2\le a_1\le b_1\le
b_2.$
\item[(3)] $a_1+b_1 \ge a_2+b_2,$ $c_2\ge c_1$ and $a_1b_1\ge a_2b_2.$
\end{enumerate}
Moreover, if the above inequalities are reversed, then the function
$x\mapsto q(x)$ is decreasing on $(0,1).$
\end{theorem}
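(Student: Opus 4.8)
The plan is to apply the Biernacki--Krzy\.z criterion, Theorem \ref{hyper3}, to the pair
$$f(x)={}_2F_1(a_1,b_1;c_1;x)=\sum_{n\ge0}r_nx^n,\qquad g(x)={}_2F_1(a_2,b_2;c_2;x)=\sum_{n\ge0}s_nx^n,$$
where $r_n=\frac{(a_1,n)(b_1,n)}{(c_1,n)\,n!}$ and $s_n=\frac{(a_2,n)(b_2,n)}{(c_2,n)\,n!}$. Both series have radius of convergence $1$, and since $a_2,b_2,c_2>0$ the coefficients $s_n$ are positive, so $g(x)\ge s_0>0$ on $(0,1)$ and $q$ is well defined there. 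By Theorem \ref{hyper3} it is then enough to show that the positive sequence $\{r_n/s_n\}_{n\ge0}$ is increasing, i.e.\ that the ratio of consecutive terms is at least $1$; using $(a,n+1)=(a,n)(a+n)$ one computes
$$\frac{r_{n+1}/s_{n+1}}{r_n/s_n}=\frac{(a_1+n)(b_1+n)(c_2+n)}{(a_2+n)(b_2+n)(c_1+n)},$$
so the whole statement reduces to the polynomial inequality
$$(a_1+n)(b_1+n)(c_2+n)\ \ge\ (a_2+n)(b_2+n)(c_1+n),\qquad n\in\{0,1,2,\dots\}.$$

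First I would dispose of the factors carrying $c_1,c_2$: in all three cases $c_2\ge c_1>0$, hence $c_2+n\ge c_1+n>0$, and also $a_2+n,b_2+n>0$; since all factors in sight are nonnegative, it therefore suffices to prove $(a_1+n)(b_1+n)\ge(a_2+n)(b_2+n)$ for $n\ge0$. Under hypothesis (1) even this is superfluous: $a_1+n\ge a_2+n$, $b_1+n\ge b_2+n$, $c_2+n\ge c_1+n$ give the three-factor inequality term by term. Under hypothesis (3) I would expand
$$(a_1+n)(b_1+n)-(a_2+n)(b_2+n)=(a_1b_1-a_2b_2)+(a_1+b_1-a_2-b_2)\,n,$$
which is $\ge0$ for $n\ge0$ because by assumption $a_1b_1\ge a_2b_2$ and $a_1+b_1\ge a_2+b_2$.

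The only step that is not purely mechanical, and the one I expect to be the main obstacle, is hypothesis (2): there $b_1\le b_2$, so the inequality is not termwise, and one must first deduce $a_1b_1\ge a_2b_2$ from $a_2\le a_1\le b_1\le b_2$ and $a_1+b_1\ge a_2+b_2$, which then subsumes case (2) under case (3). This is precisely the Schur-concavity of the product $(x,y)\mapsto xy$, but the direct argument is quick: writing $a_1=a_2+s$ and $b_1=b_2-t$ one has $s\ge0$, $t\ge0$ and $t\le s$ (the last being exactly $a_1+b_1\ge a_2+b_2$), whence
$$a_1b_1-a_2b_2=s\,b_2-t\,a_1\ge s\,a_1-t\,a_1=(s-t)\,a_1\ge0,$$
using $s\ge0$ and $a_1\le b_1\le b_2$ for the first inequality and $s\ge t$, $a_1>0$ for the last.

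Finally, for the ``reversed inequalities'' assertion the identical computation applies with every $\ge$ turned into $\le$: the ratio of consecutive terms of $\{r_n/s_n\}$ is then at most $1$, the sequence is decreasing, and Theorem \ref{hyper3} yields that $x\mapsto q(x)$ is decreasing on $(0,1)$. Equivalently, one may apply the part already proved to $1/q(x)={}_2F_1(a_2,b_2;c_2;x)/{}_2F_1(a_1,b_1;c_1;x)$.
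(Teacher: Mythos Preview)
Your proof is correct and follows the same overall strategy as the paper: reduce, via the Biernacki--Krzy\.z criterion, to the inequality $(a_1+n)(b_1+n)(c_2+n)\ge(a_2+n)(b_2+n)(c_1+n)$ for all $n\ge0$, dispose of the $c$-factors using $c_2\ge c_1$, and handle (1) and (3) exactly as the paper does.

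The only substantive difference is in case (2). The paper argues directly that $(a_2+n)(b_2+n)\le(a_1+n)(b_1+n)$ by a parabola argument: after shrinking $a_1,b_1$ to $a_1',b_1'$ with $a_1'+b_1'=a_2+b_2$ and $a_2\le a_1'\le b_1'\le b_2$, one notes that $t\mapsto(a_2+b_2+n-t)(n+t)$ is maximised at $t=(a_2+b_2)/2$, so $f(a_2)\le f(a_1')$. You instead deduce $a_1b_1\ge a_2b_2$ from the ordering $a_2\le a_1\le b_1\le b_2$ together with $a_1+b_1\ge a_2+b_2$, via the substitution $a_1=a_2+s$, $b_1=b_2-t$. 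This has the pleasant side effect of showing that hypothesis (2) actually implies hypothesis (3), a fact the paper does not record; so your argument is marginally more informative while being just as short.
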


\begin{proof}[\bf Proof]
We prove only the part when $x\mapsto q(x)$ is increasing. The other
case is similar, so we omit the details. Observe that the sequence
$\{r_n/s_n\}_{n\geq0}$ is increasing if and only if for all
$n\in\{0,1,\dots\}$ we have
$$\frac{r_n}{s_n} = \frac{\frac{(a_1,n)(b_1,n)}{(c_1,n)n!}}{\frac{(a_2,n)(b_2,n)}{(c_2,n)n!}} \le
\frac{\frac{(a_1,n+1)(b_1,n+1)}{(c_1,n+1)(n+1)!}}{\frac{(a_2,n+1)(b_2,n+1)}{(c_2,n+1)(n+1)!}}
= \frac{r_{n+1}}{s_{n+1}}$$ or equivalently
\begin{equation}\label{quo}(a_2+n)(b_2+n)(c_1+n)\le
(a_1+n)(b_1+n)(c_2+n).\end{equation}

(1) By using the previous theorem we get both cases of the first
claim.

(2) For the second claim we only need to prove that
$(a_2+n)(b_2+n)\le (a_1+n)(b_1+n)$ for all $n\in\{0,1,\dots\}.$ We
can reduce $a_1$ and $b_1$ into $a_1'$ and $b_1'$ so that
$a_1'+b_1'=a_2+b_2$ and $0< a_2\le a_1'\le b_1'\le b_2$ still holds.
Now we get both cases of the second claim by noticing that the graph
of the function $f(t)=(a_2+b_2+n-t)(n+t)$ is a parabola which gets
its maximum value in $(a_2+b_2)/2$ and that $f(a_2)\le f(a_1').$

(3) Observe that if $a_1b_1\geq a_2b_2$ and $a_1+b_1\geq a_2+b_2,$
then $$n^2+(a_1+b_1)n+a_1b_1\geq n^2+(a_2+b_2)n+a_2b_2$$ or
equivalently $$(a_2+n)(b_2+n)\le (a_1+n)(b_1+n)$$ for all
$n\in\{0,1,\dots\}.$
\end{proof}

Now, we would like to study the sign of the coefficients of the
power series $q(x)$ in Theorem \ref{hyper4}. However, it is not easy
to use Jurkat's result in Theorem \ref{thjurk}, since it is
difficult to verify for what $a_1,b_1,c_1,a_2,b_2$ and $c_2$ is
valid the inequality $r_n-r_{n-1}\geq s_n-s_{n-1}$ or its reverse
for all $n\in\{1,2,\dots\}.$ All the same, there is another useful
result of Jurkat \cite{j}, which generalizes Kaluza's Theorem
\ref{Kaluza1} and it is strongly related to Theorem \ref{hyper3} of
Biernacki and Krzy\.z.

\begin{theorem}\label{thjurkat}
Let us consider the power series $f(x)=\sum_{n\geq0}a_nx^n$ and
$g(x)=\sum_{n\geq0}b_nx^n,$ where $b_n>0$ for all
$n\in\{0,1,\dots\}$ and the sequence $\{b_n\}_{n\geq0}$ is
log-convex. If the sequence $\{a_n/b_n\}_{n\geq0}$ is increasing
(decreasing), then the coefficients of the power series
$q(x)=f(x)/g(x)=\sum_{n\geq0}q_nx^n$ satisfies $q_n\geq0$
($q_n\leq0$) for all $n\in\{1,2,\dots\}.$
\end{theorem}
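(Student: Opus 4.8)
The plan is to reduce Theorem~\ref{thjurkat} to Kaluza's Theorem~\ref{Kaluza1} by recognizing $q(x)=f(x)/g(x)$ as a ratio in which the denominator contributes the Kaluza mechanism and the numerator monotonicity provides the sign control. First I would write $q(x)=f(x)\cdot\frac{1}{g(x)}$ and set $\frac{1}{g(x)}=\sum_{n\geq0}\beta_nx^n$. Since $\{b_n\}_{n\geq0}$ is positive and log-convex, Theorem~\ref{Kaluza1} gives $\beta_0=1/b_0>0$ and $\beta_n\leq0$ for all $n\geq1$. Then the coefficients of $q(x)$ are the Cauchy convolution
$$q_n=\sum_{k=0}^{n}a_k\beta_{n-k}=a_n\beta_0+\sum_{k=0}^{n-1}a_k\beta_{n-k}.$$
The difficulty is that the $a_k$ need not be positive (we only know $\{a_n/b_n\}$ is monotone), so this raw convolution does not immediately have a definite sign; some rearrangement using the structure $a_k=(a_k/b_k)b_k$ is needed.

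The key step is to exploit the identity $g(x)q(x)=f(x)$ at the level of coefficients together with the monotonicity hypothesis. Writing $c_n=a_n/b_n$ and comparing coefficients in $\bigl(\sum b_nx^n\bigr)\bigl(\sum q_nx^n\bigr)=\sum a_nx^n$ gives, for each $n\geq1$,
$$\sum_{k=0}^{n}b_{n-k}q_k=a_n=c_nb_n=c_n\sum_{k=0}^{n}b_{n-k}\cdot[\text{something}]$$
which is not yet clean; the cleaner route is to induct on $n$. Suppose $\{c_n\}$ is increasing and assume $q_1,\dots,q_{n-1}\geq0$. From $b_0q_n=a_n-\sum_{k=0}^{n-1}b_{n-k}q_k$ and the analogous relation at index $n-1$, or more directly by substituting $a_k=c_kb_k$ and using an Abel-type summation, one wants to show the right-hand side is $\geq0$. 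Here the monotonicity $c_k\leq c_n$ for $k<n$ should be combined with nonnegativity of certain partial convolutions of the $b_k$'s — precisely the kind of positivity that Kaluza's theorem, applied to $g$, encodes. I expect the argument to mirror Jurkat's proof of Theorem~\ref{thjurk} almost verbatim, with $\overline{\Delta}$-differences of $q_n$ replaced by the condition that $\{a_n/b_n\}$ is monotone; indeed Theorem~\ref{thjurk} is the special case $b_n\equiv p_n$ decreasing, and log-convexity of $\{b_n\}$ is exactly what allows one to drop the ``decreasing'' assumption on the denominator.

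The main obstacle will be isolating the correct positive quantity to induct on: one needs a lemma saying that if $\{b_n\}$ is positive and log-convex then the ``truncated reciprocal'' sums $\sum_{k=0}^{m}\beta_k b_{n-k}$ (which telescope via $\sum_{k=0}^{n}\beta_kb_{n-k}=\delta_{n,0}$) have a sign pattern making $q_n=\sum_k c_{n-k}(\text{positive combination})$ manifestly nonnegative when $\{c_n\}$ is increasing. Once that combinatorial positivity is in place the induction closes immediately, and the decreasing case follows by applying the increasing case to $-f(x)+Cg(x)$ for large $C$, or simply by reversing every inequality. I would therefore structure the write-up as: (i) invoke Theorem~\ref{Kaluza1} for $g$; (ii) state and prove the convolution-positivity lemma; (iii) run the induction on $n$ using $c_k\leq c_n$; (iv) dispose of the decreasing case by symmetry.
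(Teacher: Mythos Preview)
The paper does not supply its own proof of Theorem~\ref{thjurkat}; the result is simply quoted from Jurkat \cite{j}, so there is nothing in the paper to compare your argument against.

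On the merits of your proposal: all of the essential ingredients are present --- invoking Theorem~\ref{Kaluza1} for $g$, writing $a_k=c_kb_k$ with $c_k=a_k/b_k$, and using the reciprocal identity $\sum_{k=0}^{n}\beta_kb_{n-k}=\delta_{n,0}$ --- but you are making the argument harder than it needs to be. No induction on $n$ and no separate ``convolution-positivity lemma'' about the partial sums $\sum_{k=0}^{m}\beta_kb_{n-k}$ is required. Once Kaluza's theorem gives $\beta_0=1/b_0>0$ and $\beta_k\le0$ for $k\ge1$, you can finish directly: for $n\ge1$,
\[
q_n=\sum_{k=0}^{n}a_{n-k}\beta_k
   =\sum_{k=0}^{n}c_{n-k}\,b_{n-k}\,\beta_k
   =\sum_{k=0}^{n}\bigl(c_{n-k}-c_n\bigr)b_{n-k}\,\beta_k
   =\sum_{k=1}^{n}\bigl(c_{n-k}-c_n\bigr)b_{n-k}\,\beta_k,
\]
where the third equality uses $\sum_{k=0}^{n}b_{n-k}\beta_k=0$. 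Each summand on the right is the product of a nonpositive factor $c_{n-k}-c_n$ (since $\{c_n\}$ is increasing and $n-k<n$), a positive factor $b_{n-k}$, and a nonpositive factor $\beta_k$; hence $q_n\ge0$. The decreasing case follows by reversing every inequality, exactly as you say. Your Abel-summation instinct can also be made to work --- the partial sums $D_m=\sum_{k=0}^{m}\beta_kb_{n-k}$ decrease from $D_0=b_n/b_0>0$ down to $D_n=0$ and are therefore all nonnegative --- but this is already more machinery than the problem needs, and the induction you outline is superfluous.
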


It is important to note here that if the radius of convergence of
the above power series is $r,$ as above, then clearly the conditions
of the above theorem imply the monotonicity of the quotient $q.$
Thus, combining Theorem \ref{hyper1} with Theorem \ref{thjurkat} we
obtain the following result.

\begin{theorem}
Suppose that all the hypotheses of Theorem \ref{hyper4} are
satisfied and in addition $2a_2b_2(c_2+1)\leq(a_2+1)(b_2+1)c_2$ and
$c_2\ge a_2+b_2-1.$ Then the coefficients of the quotient
$$x\mapsto q(x)=\frac{{}_2F_1(a_1,b_1;c_1;x)}{{}_2F_1(a_2,b_2;c_2;x)}=
\frac{r_0 + r_1 x + r_2 x^2+\dots}{s_0 + s_1 x + s_2
x^2+\dots}=q_0+q_1x+q_2x^2+\dots$$ satisfy $q_n\geq0$ for all
$n\in\{1,2,\dots\}.$ Moreover, if the inequalities in Theorem
\ref{hyper4} are reversed, then $q_n\leq0$ for all
$n\in\{1,2,\dots\}.$

 \end{theorem}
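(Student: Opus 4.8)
The plan is to treat this as a direct composition of the two structural results already in hand: Theorem~\ref{hyper1}, which controls the denominator, and Theorem~\ref{thjurkat}, which converts "log-convex denominator $+$ monotone ratio of coefficients" into a sign statement for the quotient. No new inequality work should be needed beyond what was done for Theorems~\ref{hyper1} and~\ref{hyper4}.

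First I would observe that the two extra hypotheses, $2a_2b_2(c_2+1)\le(a_2+1)(b_2+1)c_2$ and $c_2\ge a_2+b_2-1$, together with $a_2,b_2,c_2>0$, are \emph{exactly} the hypotheses of Theorem~\ref{hyper1} applied to the denominator series ${}_2F_1(a_2,b_2;c_2;x)=\sum_{n\ge0}s_nx^n$. Hence, by that theorem, the coefficient sequence $\{s_n\}_{n\ge0}$ is positive and log-convex. Next, since all the hypotheses of Theorem~\ref{hyper4} are assumed (one of (1), (2) or (3)), the proof of Theorem~\ref{hyper4} shows that inequality~\eqref{quo} holds for every $n\in\{0,1,\dots\}$, which is precisely the statement that the sequence $\{r_n/s_n\}_{n\ge0}$ is increasing.

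Now I would apply Theorem~\ref{thjurkat} with $f(x)={}_2F_1(a_1,b_1;c_1;x)=\sum_{n\ge0}r_nx^n$ and $g(x)={}_2F_1(a_2,b_2;c_2;x)=\sum_{n\ge0}s_nx^n$. The hypotheses of that theorem are met: $s_n>0$ for all $n$, the sequence $\{s_n\}_{n\ge0}$ is log-convex (previous step), and $\{r_n/s_n\}_{n\ge0}$ is increasing. Its conclusion gives $q_n\ge0$ for all $n\in\{1,2,\dots\}$, where $q(x)=f(x)/g(x)=\sum_{n\ge0}q_nx^n$; this is the first assertion. For the second assertion, if the inequalities appearing in Theorem~\ref{hyper4} are reversed, then the same computation with~\eqref{quo} reversed shows that $\{r_n/s_n\}_{n\ge0}$ is decreasing. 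Crucially, the two extra conditions on $a_2,b_2,c_2$ are \emph{not} among the reversed inequalities, so $\{s_n\}_{n\ge0}$ is still positive and log-convex by Theorem~\ref{hyper1}; Theorem~\ref{thjurkat} then yields $q_n\le0$ for all $n\in\{1,2,\dots\}$.

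I expect no serious obstacle: the argument is a clean chaining of Theorems~\ref{hyper1}, \ref{hyper4} and~\ref{thjurkat}. The only point requiring a little care is bookkeeping in the reversed case --- one must note that the log-convexity of the denominator (needed to invoke Theorem~\ref{thjurkat}) is supplied by hypotheses that are held fixed, not reversed --- and, as a minor remark, that both hypergeometric series have radius of convergence at least $1$, so the interval $(0,1)$ on which the quotient is studied is legitimate.
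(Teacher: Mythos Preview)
Your proposal is correct and matches the paper's own argument exactly: the paper states this theorem immediately after the sentence ``Thus, combining Theorem~\ref{hyper1} with Theorem~\ref{thjurkat} we obtain the following result'' and gives no further proof, so the chaining you describe (Theorem~\ref{hyper1} for log-convexity of $\{s_n\}$, the proof of Theorem~\ref{hyper4} for monotonicity of $\{r_n/s_n\}$, then Theorem~\ref{thjurkat}) is precisely what is intended. Your observation that in the reversed case the conditions on $a_2,b_2,c_2$ are held fixed is the only subtlety, and you handle it correctly.
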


Rational expressions involving hypergeometric functions occur in
many contexts in classical analysis. For instance \cite[Theorem
3.21]{avv} states some properties such as monotonicity or convexity
of several functions of this type. But much stronger conclusions
might be true. In fact, in \cite[p. 466]{avv} it is suggested that
several of the functions in the long list of \cite[Theorem
3.21]{avv} might have Maclaurin series with coefficients of the same
sign (except possibly the leading coefficient). This topic remains
widely open since there does not seem to exist a method for
approaching this type of questions.

Finally, let us mention another result, which is also strongly
related to Biernacki and Krzy\.z criterion and is useful in
actuarial sciences in the study of the non-monotonic ageing property
of residual lifetime.

\begin{theorem}
Suppose that the power series $f(x)=\sum_{n\geq0}a_nx^n$ and
$g(x)=\sum_{n\geq0}b_n x^n$ have the radius of convergence $r>0.$ If
the sequence $\{a_n/b_n\}_{n\geq0}$ satisfies $a_0/b_0\leq
a_1/b_1\leq \dots\leq a_{n_0}b_{n_0}$ and $a_{n_0}b_{n_0}\geq
a_{n_0+1}b_{n_0+1}\geq\dots\geq a_nb_n\geq\dots$ for some
$n_0\in\{0,1,\dots,n\},$ then there exists an $x_0\in(0,r)$ such
that the function $x\mapsto {f(x)}/{g(x)}$ is increasing on
$(0,x_0)$ and decreasing on $(x_0,r).$
\end{theorem}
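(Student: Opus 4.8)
The plan is to show that the derivative of $h:=f/g$ changes sign at most once on $(0,r)$, from positive to negative; then $x_0$ is that sign change (or $r$, in the purely monotone case). I assume, as in Theorem~\ref{hyper3}, that $b_n>0$ for every $n$, so that $g>0$ on $(0,r)$, and I put $c_n=a_n/b_n$. If $\{c_n\}$ is weakly monotone---which is exactly the case in which all its increments of one sign vanish---then Theorem~\ref{hyper3} already gives that $h$ is monotone on $(0,r)$ and there is nothing to prove; so I assume $\{c_n\}$ is genuinely unimodal with peak at $n_0$.

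Since $h'=(f'g-fg')/g^2$, it is enough to treat $W:=f'g-fg'=\sum_{N\ge0}d_Nx^N$. First, $W>0$ near $0$: with $n_-:=\min\{n\ge1:c_n\neq c_0\}$ one has $n_-\le n_0$ and $c_{n_-}>c_0$ (because $\{c_n\}$ increases up to $n_0$), so $h(x)=c_0+(c_{n_-}-c_0)(b_{n_-}/b_0)\,x^{n_-}+O(x^{n_-+1})$ and hence $W(x)>0$ for small $x>0$. Next, Cauchy's product rule and a symmetrisation of the double sum (the computation underlying Theorem~\ref{hyper3}) give
$$xW(x)=\sum_{0\le m<n}(n-m)\,b_mb_n\,(c_n-c_m)\,x^{m+n}=\sum_{l\ge0}(c_{l+1}-c_l)\,Q_l(x),\qquad Q_l(x):=\sum_{0\le m\le l<n}(n-m)\,b_mb_n\,x^{m+n},$$
so every $Q_l$ has non-negative coefficients; and a short rearrangement of the first sum gives the closed form $d_N=\sum_{j=0}^{N+1}(2j-N-1)\,b_j\,b_{N+1-j}\,c_j$.

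I would then use the decomposition of a unimodal sequence about its peak, $c_n=c_{n_0}-u_n-v_n$ with $\{u_n\}$ non-increasing $\ge0$ and $\{v_n\}$ non-decreasing $\ge0$. Since $\sum_j(N+1-2j)b_jb_{N+1-j}=0$ by antisymmetry, the closed form splits as $d_N=d_N^{(u)}+d_N^{(v)}$ with
$$d_N^{(u)}=\sum_{2j<N+1}(N+1-2j)\,b_jb_{N+1-j}(u_j-u_{N+1-j})\ge0,\qquad d_N^{(v)}=\sum_{2j<N+1}(N+1-2j)\,b_jb_{N+1-j}(v_j-v_{N+1-j})\le0,$$
for \emph{every} $N$. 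The assertion that remains is that $\{d_N\}$---the termwise sum of this non-negative and this non-positive sequence---changes sign at most once, from $+$ to $-$. Granting it: if $\{d_N\}$ never becomes negative then $W\ge0$ on $(0,r)$ and $h$ is increasing; otherwise, with $N^\ast$ the last index for which $d_{N^\ast}\ge0$, the function $W(x)/x^{N^\ast}=\sum_{N\le N^\ast}d_Nx^{N-N^\ast}-\sum_{N>N^\ast}|d_N|x^{N-N^\ast}$ is a sum of two non-increasing functions of $x>0$, hence non-increasing, so $W$ changes sign at most once on $(0,r)$, necessarily from $+$ to $-$ by the expansion above; this is the claim.

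So the crux is the single-sign-change property of $\{d_N\}$---equivalently, that the unimodality of $\{c_n\}$ is inherited by $x\mapsto h'(x)$. I would obtain the conclusion it yields from the total-positivity structure of the problem, which in fact lets one argue directly at the level of $h$: the family of weights $w_n(x):=b_nx^n/g(x)$ on $(0,r)\times\{0,1,\dots\}$ is strictly totally positive (each minor is a positive multiple of a generalized Vandermonde determinant), $h(x)-\lambda=\sum_n(c_n-\lambda)w_n(x)$, and $h'(x)=x^{-1}\operatorname{Cov}(c_{N_x},N_x)$ for $N_x$ with law $\{w_n(x)\}$; by Karlin's variation-diminishing theorem $x\mapsto h(x)-\lambda$ then has at most two sign changes on $(0,r)$ for each $\lambda$, since $\{c_n-\lambda\}$ does. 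The remaining---and most delicate---step is to upgrade this two-sign-change bound to the sharp ``increasing then decreasing'' profile, ruling out the ``valley'' and multi-hump shapes; for that I would use that $h$ is increasing near $0$ (shown above), that $\min_n c_n\le h\le c_{n_0}$ on $(0,r)$, and that $h(r^-)=\lim_n c_n$ when $g$ diverges at $r$, in a level-set argument. This reconciliation of the variation bound with the precise monotonicity profile is where I expect the main obstacle to lie; a purely computational alternative is to prove the single-sign-change property of $\{d_N\}$ directly from its closed form, but this likewise rests on the same total-positivity input.
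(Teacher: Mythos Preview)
The paper does not supply its own proof of this theorem; it only records that a variant appears in \cite{ruiz} and that the argument there rests on the variation-diminishing property of totally positive kernels in the sense of Karlin. Your second approach, via the strict total positivity of $(x,n)\mapsto b_nx^n$, is therefore precisely the method the paper points to.

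Two comments on your execution. First, the route through the Taylor coefficients $d_N$ of $W=f'g-fg'$ is a dead end as written: the splitting $d_N=d_N^{(u)}+d_N^{(v)}$ with $d_N^{(u)}\ge0$ and $d_N^{(v)}\le0$ is correct, but the sum of a non-negative and a non-positive sequence can have an arbitrary sign pattern, so no sign-change bound on $\{d_N\}$ follows from this alone; indeed there is no reason to expect the \emph{coefficient} sequence itself to change sign only once --- the conclusion concerns the function $W$ on $(0,r)$, not its Maclaurin coefficients. Second, the step you flag as ``most delicate'' is in fact routine once you use the full strength of Karlin's theorem, which preserves not only the number but also the \emph{order} of sign changes. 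For every real $\lambda$ the unimodal sequence $\{c_n-\lambda\}$ has at most two sign changes and, when two, the pattern is $-,+,-$; hence $f(x)-\lambda g(x)=\sum_n(c_n-\lambda)b_nx^n$, and therefore $h(x)-\lambda$ (dividing by $g(x)>0$), has the same property on $(0,r)$. Thus every superlevel set $\{x\in(0,r):h(x)>\lambda\}$ is an interval, which is exactly quasi-concavity of $h$; combined with your observation that $h$ is strictly increasing near $0$, this already forces the increasing-then-decreasing profile and rules out the ``valley'' shape without any further level-set or boundary analysis.
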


Note the a variant of the above result appears recently in
\cite[Lemma 6.4]{ruiz} with $a_n$ and $b_n$ replaced with $a_n/n!$
and $b_n/n!$ and the proof is based on the so-called variation
diminishing property of totally positive functions in the sense of
Karlin.

\subsection*{Acknowledgments}
The research of \'Arp\'ad Baricz was supported by the J\'anos Bolyai
Research Scholarship of the Hungarian Academy of Sciences and by the
Romanian National Council for Scientific Research in Education
CNCSIS-UEFISCSU, project number PN-II-RU-PD\underline{ }388/2011.
The research of Matti Vuorinen was supported by the Academy of Finland,
Project 2600066611.
The authors are indebted to the referee for his/her constructive
comments and helpful suggestions, which improved the first draft of
this paper.

\end{document}